\newtheorem{theorem}{Theorem}[section]
\newtheorem{lemma}[theorem]{Lemma}
\newtheorem{corollary}[theorem]{Corollary}
\newtheorem{proposition}[theorem]{Proposition}
\theoremstyle{definition}
\newtheorem{definition}[theorem]{Definition}
\theoremstyle{remark}
\numberwithin{equation}{section}
\def\CE{\mathcal E}
\def\CN{\mathcal N}
\def\Z{\mathbb Z}
\def\N{\mathbb N}
\newcommand{\inv}{^{-1}}
\newcommand{\abs}[1]{\vert #1 \vert }
\begin{document}

\title{Random equations in nilpotent groups}

\author{Robert H. Gilman}
\address{Department of Mathematical Sciences\\Stevens Institute of Technology}
\curraddr{}
\email{Robert.Gilman@stevens.edu}
\thanks{}

\author{Alexei Myasnikov}
\address{Department of Mathematical Sciences\\Stevens Institute of Technology}
\curraddr{}
\email{amiasnikov@gmail.com}
\thanks{The second author was partially supported by  NSF grant
DMS-0405105,  NSERC Discovery grant RGPIN 261898, and NSERC Canada Research Chair
 grant.}

\author{Vitali\u\i\ Roman'kov}
\address{Institute of Mathematics and Information Technologies\\Omsk State Dostoevskii University}
\curraddr{}
\email{romankov48@mail.ru}
\thanks{The third author was partially supported by RFBR Grant 10-01-00383. The third author  is very grateful to Stevens Institute of Technology for its hospitality.}

\subjclass[2000]{Primary 20P05; secondary 20F69, 20F10, 20F18, 16W20, 13B10, 13B25, 20F05.}

\keywords{free abelian groups, free nilpotent groups, free groups, equations, asymptotic density}

\date{}


\begin{abstract}

In this paper we study satisfiability of random equations in an infinite finitely generated nilpotent  group $G$.   Let $SAT(G,k)$ be the set of all satisfiable equations over $G$ in $k$ variables. For a free abelian group $A_m$ of rank $m$ we show that the ball asymptotic density $\rho(SAT(A_m,k))$ of the set $SAT(A_m,k)$ (in the whole space of all equations in $k$ variables over $G$) is equal to $0$ for $k = 1$, and is equal to $\zeta(k+m)/\zeta(k)$ for $k\ge 2$.  More generally, if $G$ is a finitely generated nilpotent infinite group then again the asymptotic density of the set $SAT(G,1)$ is $0$. For $k\geq 2$ we give robust estimates for upper and lower asymptotic densities of the set $SAT(G,k)$. Namely, we prove that 
these densities lie in the interval  from $ \frac{1}{t(G)}\frac{\zeta (k +  h(G))}{\zeta (k)}$ to $\frac{\zeta (k + m)}{\zeta (k)}$, where $h(G)$ is the Hirsch length of $G$, $t(G)$ is the order of lower central torsion of $G$, and  $m$ is the torsion-free rank (Hirsch length) of the abelianization of $G$. This allows one to describe the asymptotic behavior of the set $SAT(G,k)$ for different parameters $k,m,h(G),t(G)$.

\end{abstract}


\maketitle

\section{Introduction}
\label{se:intro}

In this paper we study satisfiability of random equations in an infinite finitely generated nilpotent  group $G$.  We show that the set $SAT(G,k)$ of all equations in $k>1$ variables over an infinite finitely generated nilpotent group which are satisfiable in $G$  has an intermediate asymptotic density  in the space of all equations in $k$ variables over $G$. When $G$ is free abelian group of finite rank  we compute this density precisely, otherwise we give some non-trivial upper and lower bounds. 
For $k = 1$ the set $SAT(G,k)$ is negligible. 

In finite group theory the idea of genericity can be traced to  works of Erd\H{o}s and Turan \cite{ET} and Dixon \cite{Di}. Nowadays, it is a well-established subject with powerful techniques and   beautiful results (for recent developments we refer to surveys in ~\cite{KL,LiSh1,Shalev3,DPSSh,Shalev4}), while in geometric  group theory the  generic approach is due to Gromov. His inspirational  works \cite{Gro1,Gro2,Gro3} turned the subject into an area of very active research, see for example~\cite{AO,A2,A4,BMR1,BMR2,BMR3,BogV,BV,BMS,CERT,BorMyas,BV,Ch1,Ch2, Jit, KSS,KRSS,KR,Ollivier,Olsh,Rom,MTV,Zuk}.   Quite often random walks  provide general  methods and techniques to study probabilistic problems in groups  (we  refer to the book \cite{Woess2} as a general reference on random walks on infinite groups).

Experience gained during the development of the Magnus computational group theory package~\cite{Magnus} motivated a novel  approach to algorithmic problems in groups, shifting the focus onto their generic complexity (complexity on most or ``generic'' inputs). The foundations of generic complexity in groups were laid down in a series of publications \cite{KMSS1,KMSS2,BMS,BMR1,GMMU,MSUbook}. 
It became clear that knowledge of  generic properties of groups, their elements, subgroups, automorphisms, etc,   can be used in design of  simple practical algorithms that work very fast on most inputs.  On explicit analysis of generic complexity of the Word, Conjugacy, and Membership  problems see \cite{KMSS1,BorMyas,BMR2,BMR3,FMR1,MU,MU2,GMO,MO1}.

 Recently, a host of papers appeared on generic properties of individual groups and related objects. 
We mention here,  in particular,  results  on generic properties of one-relator groups \cite{KS1,KS2};  random elements of mapping class groups and automorphisms of free groups \cite{maher,Riv2};  on Whitehead minimal elements \cite{KRSS} and SCL in free groups \cite{Cal1,Cal2}; random  van Kampen diagrams \cite{MU2} and  average Dehn functions \cite{BogV,KR,Rom}.  Another interesting development concerns with random subgroups of a given group. The following groups have been studied: random groups \cite{AO,A2,A3}, braid group \cite{MU,MSU}, hyperbolic group \cite{GMO}, Thompson's group $F$ \cite{CERT}, linear groups \cite{Aoun,Riv3}, and amalgams of groups \cite{FMR}. 

 In cryptography, several successful attacks have exploited  generic properties of randomly chosen  objects to break cryptosystems \cite{MU,MSUbook,MSU,RST,GK}. 

Beyond the Word and Conjugacy problems not much is known about generic properties of equations in groups - this paper is the first in this direction. At the end of the paper we formulate several open problems on random equations in groups.

\subsection{Asymptotic densities}
\label{subsec:densities}

A {\em stratification} of a countable set $T$ is a  sequence $\{T_m\}_{m \in \mathbb{N}}$ of non-empty finite subsets $T_m$ whose union is $T$. 
Stratifications are often specified by size functions. A {\em size} or {\em length function} on $T$ is a map $\sigma:T\to\N$ from $T$ to the nonnegative integers $\N$ such that the inverse image of every integer is finite. The corresponding  ball stratification is formed by {\em balls} $B_m = \{t \in T \mid \sigma(t) \leq m\}$. One can also  consider stratifications by {\em spheres}, but in this paper, if not said otherwise, all the stratifications $\{T_m\}$ are the ball ones relative to some chosen size functions. 

The {\em asymptotic density} of $A\subset T$ with respect to a stratification $\{T_m\}$ is defined to be
$$\rho(A) = \lim_{m\to\infty} \rho_m(A), \ \ \ where \ \ \  \rho_m(A) = \frac{\abs{A\cap T_m}} {\abs{T_m}},$$
 when the limit exists. Otherwise, we use the limits 
$$\bar{\rho}(A) = \limsup_{m\to\infty} \rho_m(A), \ \ \ \underline{\rho}(A) = \liminf_{m\to\infty} \rho_m(A).$$
Note, that some authors call $\rho(A)$ the strict asymptotic density and refer to $\bar{\rho}$ as the standard asymptotic density.

$A$ is said to be {\em generic} in $T$ with respect to the stratification $\{T_m\}_{m \in \mathbb{N}}$ if $\rho(A) = 1$ and {\em negligible} if $\rho(A) = 0$. A property of elements of $T$ is generic if it holds on a generic subset.

Similarly, one can define relative genericity of subsets in $T$. For subsets $A \subseteq  B \subseteq T$  we define first a relative frequency $\rho_m (A | B)$ of $A$ in $B$ with respect to $\{T_m\}_{m \in \mathbb{N}}$ as the  ratio
$$\rho_m (A | B) = \frac{\abs{A\cap T_m}}{\abs{B \cap T_m}}  = \frac{\rho_m(A)}{\rho_m(B)}.$$
 Here we treat the fractions $\frac{0}{0}$ as $0$. In particular,  $\rho_m(A | T) = \rho_m(A)$.  Now the asymptotic density of $A$ relative to $B$ is defined by
 $$
 \rho(A|B) = \lim_{m\to\infty} \rho_m(A | B),
 $$
 and 
 $$\bar{\rho}(A|B) = \limsup_{m\to\infty} \rho_m(A|B).$$
 We say that $A$ is generic in $B$ relative to $\{T_m\}_{m \in \mathbb{N}}$ if $\rho(A|B) = 1$ and negligible if the limit is $0$.

\subsection{Random equations over groups}
\label{subsec:equations}

For general facts on equations and algebraic geometry over groups we refer to \cite{BMR}. An equation $u = 1$  in $k$ variables over a group $G$ is an expression of the form $g_0x_{i_1}^{m_1}g_1\ldots x_{i_n}^{m_n}g_n = 1$ where each coefficient $g_j$ is a group element, each exponent $m_j$ is an integer, and each $x_{i_j}$ is taken from an alphabet of variables, $X=\{x_1,\ldots, x_k\}$. In this case, the free product $G_X = F(X)\ast G$ is  the space of all equations in variables $X$ and coefficients in $G$.  A solution of $u =1$ in $G$ is an assignment $x_j\to h_j\in G$ such that $g_0h_{i_1}^{m_1}g_1\ldots h_{i_n}^{m_n}g_n = 1$. Denote by $SAT(G,K)$ the set of all  equations from $G_X$ which have a solution in $G$ ({\em satisfiable} equations). We are interested in the following  question: what is the  probability of a {\em random equation} from $G_X$ to be satisfiable in $G$? First of all, the answer depends on the given distribution on $G_X$.  If $G$ is a free group then there are natural and easy to compute distributions on $G_X$ (see \cite{BMS,BMR2}), otherwise working with measures on arbitrary free products could be rather technical. We refer to a series of  papers \cite{BMR3,BMR2,FMR1} on asymptotic properties of the Conjugacy Problem in free products  with amalgamation and HNN extensions.     Satisfiability of random equations in free groups is studied in \cite{GMR2}. Solvability of random equations without variables, i.e,  random instances of the  Word Problem,  in a given finitely presented group $G$ is an important problem  with interesting applications. We refer to \cite{KMSS1} and \cite{MU} for recent progress in this area.

 Furthermore, besides the problem of choosing a proper distribution on $G_X$ there is another important issue - what is the proper space of equations for a given group $G$?  The free product $G_X$ seems suitable  if the group $G$ is free, or non-elementary hyperbolic, or any other group that generate the variety of all groups. Meanwhile,  if $G$ is an abelian group, then it would be natural to consider  an equation $abx_2b\inv x_1=1$ as essentially the same  as $ax_1x_2=1$. More generally, it is natural to  consider two equations as essentially the same if one can be transformed into the other by applying identities of  the variety of abelian groups. So the natural space of equations in variables $X$ over an abelian group $G$ is the direct product $G \times A(X)$ of $G$ and a free abelian group $A(X)$  with basis $X$. The unifying approach is that if $G$ is in a variety ${\mathcal V}$ then the space of equations in variables $X$ over $G$ {\em relative to ${\mathcal V}$}  is the free product in the variety ${\mathcal V}$ of $G$ and a  ${\mathcal V}$-free group with basis $X$. Following this approach for a nilpotent group $G$  of nilpotency class $c$ we define the space of {\em nilpotent} equations as the set of elements of a free product $ G \ast_{\mathcal N} F_{\mathcal N}(X)$, where ${\mathcal N}$ is the variety of all nilpotent groups of class $\leq c$, $\ast_{\mathcal N}$ is the free product in ${\mathcal N}$, and $F_{\mathcal N}(X)$ is a free nilpotent group of class $c$ with basis $X$ (see Section \ref{sec:nilpotent} for details). Moreover, the group $G_X = G \ast_{\mathcal N} F_{\mathcal N}(X)$ has a nice set of unique normal forms of elements that come from a fixed base of $G_X$ of a certain type (sometimes called Malcev or polycyclic bases). So the actual set of nilpotent equations for us is this set of normal forms of elements of $G_X$ with respect to a chosen base.  There is a robust ball stratification of $G_X$ relative to the norm $||\cdot||$ on $G_X$ which comes naturally with the base. Now, as is customary in discrete groups, we use the asymptotic density relative to the ball stratification  to measure various  subsets  of equations  in $G_X$, in particular the sets $SAT(G,k)$.

\subsection{Survey of results}
\label{subsec:survey}

In Section~\ref{sec:abelian} we compute the (ball) asymptotic density $\rho(SAT(A, k))$ of a free abelian group $A = \Z^m$  with respect to the size function $||\cdot||$, where $||(a_1,\ldots, a_m)|| = \max\{\abs{a_i}\}$. The main result of the section is Theorem~\ref{th:abelian-two-var}, which states that $\rho(SAT(A, k)) = \zeta(k+m)/\zeta(k)$ for $k\ge 2$.  For $k = 1$ it is not hard to prove (Corollary~\ref{co:estimates})that $\rho(SAT(A, 1)) = 0$. In particular, $SAT(\Z^m,k)$ has an intermediate asymptotic density for equations in more then one variable. This is not very surprising, though to get the actual number requires some work. The proof of Theorem~\ref{th:abelian-two-var} is based on explicit knowledge of the rates of convergence for the asymptotic densities of the set of $\gamma$-th powers in $\Z^m$ and of the set of $m$-tuples  with the greatest common divisor $\gamma$. The densities themselves are known (\cite{KRSS} and \cite{HW, Mert,Chr,KRSS}), but the convergence rates do not seem to be in the literature. We estimate them in Propositions~\ref{pr:gamma-Z-k} and~\ref{pr:2.5}.

In Section~\ref{sec:nilpotent} we extend the results of Section~\ref{sec:abelian} to  an arbitrary  finitely generated infinite  nilpotent group $G$ of class $c$.  In this case an equation $u = 1$ over $G$ is viewed as an element of the nilpotent group $G_X$. The size function on $ G_X$ is the norm $||\cdot||$, where $||u||$ for $u \in G_X$ is given by 
$$
||u|| = \max\{|\alpha_1|, \ldots, |\alpha_s|\}
$$
and  $\alpha_1, \ldots,\alpha_s \in \mathbb{Z}$ are the coordinates of $u$ in a fixed Malcev basis of $G_X$ (see Section \ref{sec:nilpotent} for details). The asymptotic density $\rho$ is the ball density  on $G_X$ relative to the size function $||\cdot||$. 
We show that $\rho(SAT(G,1)) = 0$ while $SAT(G,k))$  has an intermediate asymptotic density for $k\geq 2$. More precisely, we show that  
$$
\frac{\zeta (k + m)}{\zeta (k)}  \geq \limsup_{n \to \infty} \rho_n(SAT(G,k)),
$$
and 
$$ 
 \liminf_{n \to \infty} \rho_n(SAT(G,k))  \geq \frac{1}{t(G)}\frac{\zeta (k +  h(G))}{\zeta (k)},
 $$
  where $h(G)$ is the Hirsch length of $G$, $t(G)$ is the order of lower central torsion of $G$,
and  $m$ is the torsion-free rank (Hirsch length) of the abelianization $G/G_2$ of $G$ (see Section \ref{sec:nilpotent} for precise definitions). Thus, the set $SAT(G,k)$ has intermediate asymptotic densities for $k\geq 2$. Again, this is not very surprising,  since solvability of an equation $u = 1$ in a nilpotent group $G$  is often determined by its solvability in the abelianization of $G$. For example,  Shmelkin's theorem \cite{Shm} shows that $u = 1$ is solvable in $G$ if the greatest common divisor of the exponents of the variables of $u$ is equal to $1$. This contributes $\frac{1}{\zeta (k)}$ to the lower bound above. However, the complete picture is much more complex and this is reflected in the bounds above.  
These bounds allows one to get a qualitative description of the set $SAT(G,k)$ relative to various parameters involved. Thus, if $G$  is a free nilpotent group $N_{m,c}$of rank $m$ and class $c$ then the lower central torsion $t(G)$  of $G$  is equal to 1, so the asymptotic densities of $SAT(N_{m,c},k)$ for $k \geq 2$ lie in the interval 
$$\left [\frac{\zeta (k +  h(N_{m,c}))}{\zeta (k)}, \frac{\zeta (k + m)}{\zeta (k)} \right ].$$
Hence, for fixed $m, c$  when $k \to \infty$ the lower density of $SAT(N_{m,c},k)$ approaches 1, i.e., $SAT(N_{m,c},k)$ is "almost" generic for $k >>1$. On the other hand, if $k\geq 2$ and $c$ are fixed but $m \to \infty$ then the interval above converges to the point $\frac{1}{\zeta (k)}$, so $SAT(N_{m,c},k)$ when the rank $m$ increases behaves asymptotically similar to the case described in  Shmelkin's theorem.  Yet, in the third scenario when $k, m \geq 2$  are fixed but $c \to \infty$ (hence $h(G) \to \infty$)  the interval converges to $\left [\frac{1}{\zeta (k)}, \frac{\zeta (k + m)}{\zeta (k)} \right ]$, so the asymptotic behavior of  $SAT(N_{m,c},k)$ is  somewhat in between the one described in  Shmelkin's theorem and the abelian case.

\section{Equations in free abelian groups}
 \label{sec:abelian}

In this section we compute the asymptotic density of the set $SAT(\Z^m ,k)$ of  all $k$-variable equations satisfiable in $\Z^m$.

\subsection{Preliminaries}
\label{subsec:preliminary-abelian}

Denote by $\N ^+$ the set of all positive integers.   From now on we fix a free abelian
group $A =  A(a_1, ... , a_m) \simeq \Z^m$   with basis $\{a_1, ... , a_m\}$ and  also a set
of variables $X = \{x_1, ... , x_k\}$,  where $m,  k \in \N ^+$.

As we have seen in Section \ref{subsec:equations} the space $A_X$ of all equations in variables
$X$ over the group $A$ is equal  to $A(a_1, ... , a_m) \times A(x_1, ... , x_k)$ - a free
abelian group with  basis $Y = \{x_1, ... , x_k, a_1, ... , a_m\}.$ In this case every element $w
\in A_X$ can be uniquely written in the form

\begin{equation}
\label{eq:abelian}
 w = x_1^{\gamma _1} ... x_k^{\gamma_k}
a_1^{\alpha_1} ... a_m^{\alpha_m},
\end{equation}

\noindent where $\gamma_1, ... , \gamma_k, \alpha_1, ... , \alpha_m \in \Z.$ The  norm $||w ||$
of an element $w \in A_X$ in the form (\ref{eq:abelian})  is defined by
 \begin{equation}
 \label{eq:norm-abelian}
  ||w|| = max \{ |\gamma_1|, ... , |\gamma_k|, |\alpha_1|, ... ,
|\alpha_m|\}.
\end{equation}
This norm gives a size function $|| \cdot || : A_X \to \N $ with the balls
 $$B_r = \{ w \in A_X \mid  ||w|| \leq r\}.$$
We call
$\gamma = gcd (\gamma_1, ... , \gamma_k) $ an  {\it exponent} of $w$ and denote it by $\gamma =
exp (w).$ In the exceptional case $\gamma_1 = ... = \gamma_k = 0$ we put $exp (w) = 0.$

  For each $r, \gamma \in
\N ^+$  a $\gamma$-{\em slice} of the ball $B_r$ is defined as
 $$B_{r,\gamma} = \{w\in B_r  \mid  exp (w) = \gamma \}.$$

We begin with a simple lemma which gives a robust criterion of satisfiability of equations in free
abelian groups.

\begin{lemma}
\label{le:solutions-abelian} Let $w = x_1^{\gamma _1} ... x_k^{\gamma_k} a_1^{\alpha_1} ...
a_m^{\alpha_m} \in A_X$  and $\gamma = exp (w)$. Then the following holds:
 \begin{enumerate}
  \item [1)] If $\gamma \neq 0$ then $w = 1$ has a solution in $A$ if and only if
   $\gamma$ divides $gcd(\alpha _1, ... , \alpha_m).$
   \item [2)]  If $\gamma = 0$ then $w = 1$ has a solution in $A$ if and only if $\alpha_1 = ... =
\alpha_m = 0$.
 \end{enumerate}
  \end{lemma}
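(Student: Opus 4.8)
The plan is to reduce satisfiability of $w = 1$ to the solvability of a system of linear Diophantine equations over $\Z$ and then apply the classical criterion for such systems.

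First I would spell out what a solution is. Any assignment $x_j \mapsto h_j \in A$ can be written, using the basis $\{a_1,\ldots,a_m\}$ of $A$, uniquely in the form $h_j = a_1^{\beta_{1j}}\cdots a_m^{\beta_{mj}}$ with $\beta_{ij}\in\Z$. Since $A$ is abelian, substituting these into $w$ and collecting the powers of each $a_i$ gives
\[
w(h_1,\ldots,h_k) \;=\; a_1^{\,\alpha_1 + \sum_{j=1}^{k}\gamma_j\beta_{1j}}\cdots a_m^{\,\alpha_m + \sum_{j=1}^{k}\gamma_j\beta_{mj}} .
\]
By uniqueness of this normal form in $A$, the assignment is a solution of $w=1$ if and only if $\alpha_i + \sum_{j=1}^{k}\gamma_j\beta_{ij} = 0$ for every $i=1,\ldots,m$. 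Hence $w=1$ is satisfiable in $A$ precisely when the system $\sum_{j=1}^{k}\gamma_j\beta_{ij} = -\alpha_i$ (for $1\le i\le m$) has a solution in integers $\beta_{ij}$.

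Next I would observe that this system decouples by the row index $i$: the $i$-th equation only involves the unknowns $\beta_{i1},\ldots,\beta_{ik}$, so the full system is solvable over $\Z$ if and only if each single equation $\sum_{j=1}^{k}\gamma_j t_j = -\alpha_i$ is. By the standard criterion for one linear Diophantine equation, $\sum_{j=1}^{k}\gamma_j t_j = -\alpha_i$ has an integer solution if and only if $\alpha_i \in \gamma_1\Z + \cdots + \gamma_k\Z = \gamma\Z$, i.e. $\gamma \mid \alpha_i$, where $\gamma = \gcd(\gamma_1,\ldots,\gamma_k) = \exp(w)$; in the degenerate case $\gamma_1=\cdots=\gamma_k=0$ this reads $\alpha_i=0$. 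Assembling the $m$ conditions: if $\gamma\ne 0$ then $w=1$ is satisfiable iff $\gamma$ divides every $\alpha_i$, equivalently iff $\gamma \mid \gcd(\alpha_1,\ldots,\alpha_m)$, which is part~1); if $\gamma = 0$ then $w = a_1^{\alpha_1}\cdots a_m^{\alpha_m}$ is already in normal form and $w=1$ iff $\alpha_1=\cdots=\alpha_m=0$, which is part~2).

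There is no serious obstacle here, as the argument is elementary; the only steps worth stating explicitly are the decoupling of the Diophantine system by coordinate, the identity $\gamma_1\Z+\cdots+\gamma_k\Z = \gamma\Z$ that collapses the per-coordinate divisibility conditions into the single gcd condition, and the convention handling the all-zero-exponent case.
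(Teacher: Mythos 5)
Your proof is correct and amounts to the same argument as the paper's: both hinge on B\'ezout's identity $\gamma_1\Z + \cdots + \gamma_k\Z = \gamma\Z$ to pass between solvability and the gcd divisibility condition. The paper phrases it group-theoretically (a solution forces $a_1^{\alpha_1}\cdots a_m^{\alpha_m}$ to be a $\gamma$-th power, and conversely a $\gamma$-th root $b$ plus B\'ezout coefficients $\xi_j$ give the explicit solution $x_j\mapsto b^{-\xi_j}$), whereas you coordinatize into a decoupled system of linear Diophantine equations; these are two renderings of the same elementary observation.
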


\begin{proof} To prove 1) assume that $\gamma = exp(w) \neq 0$. If an  equation $w = 1$ has a solution  in $A$
then $a = a_1^{\alpha_1} ... a_m^{\alpha_m}$ is a $\gamma$-power of some element in $A$, hence
$\gamma$ divides each exponent $\alpha_i, i = 1, \ldots, k$, hence $\gamma$
divides $gcd(\alpha _1, ... , \alpha_m)$, as required. Conversely, if $\gamma$ divides
$gcd(\alpha_1, ... , \alpha_m)$ then $a  = b^\gamma$ for some $b \in A$.
Since $\gamma = gcd(\gamma_1, \ldots, \gamma_k)$ there are integers
$\xi_1, \ldots, \xi_k$ such that $\xi_1\gamma_1 + \ldots + \xi_k\gamma_k = \gamma$. It is easy to check
that the map $x_1 \to b^{-\xi_1}, \ldots, x_k \to b^{-\xi_k}$ gives a solution of $w = 1$ in $A$.

The case 2) is  truly obvious.

 \end{proof}

Recall the standard algebraic notation
 $$\gamma \Z^m =
\{(\alpha_1, \ldots, \alpha_m)\in  \Z^m \mid \gamma
  \mbox{ divides} \ gcd(\alpha_1, \ldots, \alpha_m)\}.$$

\begin{corollary}
\label{co:SAT(A-1)}
 $ SAT(A,1) = \{x^{\pm\gamma }a_1^{\alpha _1} \ldots a_m^{\alpha _m} \mid \gamma \in \N ^+, (\alpha _1,
 \ldots , \alpha _m) \in
  \gamma\Z^m\} \cup \{1\}$.
\end{corollary}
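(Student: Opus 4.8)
The plan is to specialize Lemma~\ref{le:solutions-abelian} to the single-variable case $k = 1$. First I would write an arbitrary element of $A_X$ in its normal form $w = x^{\gamma_1} a_1^{\alpha_1} \cdots a_m^{\alpha_m}$, where $x = x_1$; since there is only one variable, the exponent of $w$ is $exp(w) = gcd(\gamma_1) = |\gamma_1|$.

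Next I would split into the two cases from the lemma. If $\gamma_1 \neq 0$, then $\gamma = exp(w) = |\gamma_1| \in \N^+$ and $\gamma_1 = \pm\gamma$, so $x^{\gamma_1} = x^{\pm\gamma}$; part~1) of Lemma~\ref{le:solutions-abelian} then says $w = 1$ is satisfiable in $A$ if and only if $\gamma$ divides $gcd(\alpha_1,\dots,\alpha_m)$, which by the displayed definition of $\gamma\Z^m$ preceding the corollary is exactly the condition $(\alpha_1,\dots,\alpha_m) \in \gamma\Z^m$. These are precisely the elements in the first set on the right-hand side of the claimed identity. If $\gamma_1 = 0$, then $exp(w) = 0$ and part~2) of the lemma says $w = 1$ is satisfiable if and only if $\alpha_1 = \cdots = \alpha_m = 0$, i.e.\ $w = 1$; this contributes exactly the extra element $1$. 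Taking the union of the two cases gives the stated description of $SAT(A,1)$.

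I do not expect any genuine obstacle here: the content is entirely contained in Lemma~\ref{le:solutions-abelian}, and the only care needed is the cosmetic bookkeeping of writing $x^{\gamma_1}$ as $x^{\pm\gamma}$ with $\gamma > 0$ and recognizing that the divisibility condition ``$\gamma$ divides $gcd(\alpha_1,\dots,\alpha_m)$'' is literally the membership condition $(\alpha_1,\dots,\alpha_m) \in \gamma\Z^m$ from the displayed notation just above the corollary.
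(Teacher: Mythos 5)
Your proof is correct and is the natural (indeed the only sensible) argument; the paper states the corollary without proof, treating it as an immediate specialization of Lemma~\ref{le:solutions-abelian} to $k=1$, which is exactly what you do.
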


To use the criterion from Lemma \ref{le:solutions-abelian} we need several known and some perhaps unknown facts from elementary number theory.

   Recall, that an  element $x = x_1^{\gamma_1} ... x_k^{\gamma _k}$ of
   a  free abelian group
  $A(X)$  is called {\it primitive}
 if it is a member of some basis of $A(X),$  or, equivalently,
   if $gcd (\gamma_1, ..., \gamma_k) = 1.$   A non-trivial element
 $x = x_1^{\gamma_1} ... x_k^{\gamma _k}$ is called  $\gamma${\it
 -primitive} (here $\gamma \in \N ^+$)
  if and only if $gcd (\gamma _1, ... , \gamma_k) = \gamma
 $, or, in other words,  $x$ is a $\gamma $-power of some primitive
element in $A(X).$ For $\gamma, k \in \N ^+$ denote by $P_{k,\gamma }$ the set of all
$\gamma $-primitive elements in the free abelian group $A(X)$ of rank $k$.

 The following result is well-known in number theory; in full generality it is due
 to Christopher~\cite{Chr}, see also~\cite{Mert, HW}
 for earlier partial results.  Below,   $\zeta(t) = \sum_{n = 1}^{\infty} \frac{1}{n^t}$
 is Riemann zeta-function.

\begin{proposition}\cite{Chr}
\label{pr:Chris} Let $\gamma , k \in N^+, k \geq 2.$ Then
   $$\rho (P_{k,\gamma}) = \frac{1}{\gamma^k \zeta (k)}.$$
\end{proposition}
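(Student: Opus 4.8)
The plan is to reduce the statement to the case $\gamma = 1$ and then compute the density of primitive lattice points by a M\"obius inversion over the gcd of the coordinates.

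First I would use the obvious scaling bijection: the map $(\gamma_1,\dots,\gamma_k)\mapsto(\gamma\gamma_1,\dots,\gamma\gamma_k)$ sends the set $P_{k,1}$ of primitive elements of $A(X)$ bijectively onto $P_{k,\gamma}$, and it carries $P_{k,1}\cap B_{\lfloor r/\gamma\rfloor}$ onto $P_{k,\gamma}\cap B_r$. Hence $\abs{P_{k,\gamma}\cap B_r}=\abs{P_{k,1}\cap B_{\lfloor r/\gamma\rfloor}}$, and it suffices to pin down the asymptotics of $N_k(s):=\abs{P_{k,1}\cap B_s}$, the number of primitive points of $\Z^k$ in the box $[-s,s]^k$, which contains $(2s+1)^k$ integer points in all.

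To count $N_k(s)$, note that every nonzero point of $B_s$ has a well-defined gcd $d\geq 1$, and dividing through by $d$ gives a bijection between the points of $B_s$ of gcd exactly $d$ and the primitive points of $B_{\lfloor s/d\rfloor}$. This gives $(2s+1)^k-1=\sum_{d\geq 1}N_k(\lfloor s/d\rfloor)$ (a finite sum, as $N_k(0)=0$), and M\"obius inversion yields
$$N_k(s)=\sum_{d\geq 1}\mu(d)\bigl((2\lfloor s/d\rfloor+1)^k-1\bigr).$$
Writing $2\lfloor s/d\rfloor+1=2s/d+\theta_d$ with $\abs{\theta_d}\leq 1$ and expanding, one has $(2\lfloor s/d\rfloor+1)^k=(2s/d)^k+O\bigl((s/d)^{k-1}\bigr)$ for $1\leq d\leq s$. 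Summing, and using the identity $\sum_{d\geq 1}\mu(d)/d^k=1/\zeta(k)$ together with the tail bound $\sum_{d>s}\abs{\mu(d)}/d^k=O(s^{1-k})$, the main term contributes $(2s)^k/\zeta(k)+O(s)$, while the accumulated error is $O\bigl(s^{k-1}\sum_{d\leq s}d^{1-k}\bigr)$, which is $O(s\log s)$ when $k=2$ and $O(s^{k-1})$ when $k\geq 3$. In all cases the error is $o(s^k)$, so $N_k(s)=(2s)^k/\zeta(k)+o(s^k)$.

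Putting the two steps together,
$$\rho_r(P_{k,\gamma})=\frac{N_k(\lfloor r/\gamma\rfloor)}{(2r+1)^k}=\frac{(2r/\gamma)^k/\zeta(k)+o(r^k)}{(2r)^k+o(r^k)}\longrightarrow\frac{1}{\gamma^k\zeta(k)}\qquad(r\to\infty),$$
which is the claim. The only point needing care is the bookkeeping of the error terms: the divergence of the harmonic sum forces $k=2$ to be treated separately from $k\geq 3$, and one must check that multiplying the M\"obius tail bound by $(2s)^k$ still leaves an error of lower order. Everything else is the classical M\"obius-inversion computation behind Christopher's theorem; I would include only this sketch, since it is really the fully quantitative form of this estimate (the rate of convergence, needed later) that does the work.
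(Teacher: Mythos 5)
The paper does not prove this proposition; it states it with the citation \cite{Chr} and relies on Christopher's theorem as a black box (the convergence rates the paper actually needs are derived separately in Propositions~\ref{pr:gamma-Z-k} and~\ref{pr:2.5}, still taking Proposition~\ref{pr:Chris} as given). So there is no in-paper proof to compare against. Your argument is correct and is the classical route: the scaling bijection $|P_{k,\gamma}\cap B_r|=|P_{k,1}\cap B_{\lfloor r/\gamma\rfloor}|$, the gcd-stratification identity $(2s+1)^k-1=\sum_{d\geq1}N_k(\lfloor s/d\rfloor)$, the hyperbolic form of M\"obius inversion (which is legitimate here because $\lfloor\lfloor s/d\rfloor/e\rfloor=\lfloor s/(de)\rfloor$), and the error bookkeeping with $k=2$ handled separately from $k\geq3$ are all sound, and the final limit $1/(\gamma^k\zeta(k))$ follows. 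Your sketch in fact gives more than the proposition asks for, namely a quantitative rate $N_k(s)=(2s)^k/\zeta(k)+O(s\log s)$ (resp. $O(s^{k-1})$), which is in the spirit of what the paper needs in Proposition~\ref{pr:2.5}, though the paper derives its uniform-in-$\gamma$ rate by a different elementary argument rather than by re-proving Christopher's theorem quantitatively.
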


The first part of the following result is an easy corollary
(see  the argument in~\cite{KRSS}) of Proposition \ref{pr:Chris}.
The two other  parts concern with estimates on the convergence rates
that we could not find in the literature, so we give a short proof of both.

\begin{proposition}
\label{pr:gamma-Z-k}
 Let $\gamma, k \in \N ^+.$ Then:
 \begin{itemize}
 \item [1)] $\rho (\gamma \Z^k) = 1/\gamma ^k$;
  \item [2)]
 $|\rho_r(\gamma \Z^k) - \frac{1}{\gamma ^k}| \leq \frac{2^{k+1}k}{r\gamma ^{k-1}}$
  for every $r \geq \gamma$.
  \item [3)] $\rho_r(\gamma \Z^k)$ converges to $1/\gamma ^k$ uniformly in $\gamma$.
 \end{itemize}
\end{proposition}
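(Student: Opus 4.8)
The plan is to reduce the whole statement to an elementary lattice‑point count. The key observation is that, by the notation recalled just before the proposition, a tuple $(\alpha_1,\ldots,\alpha_k)$ lies in $\gamma\Z^k$ exactly when $\gamma$ divides every $\alpha_i$, so $\gamma\Z^k=(\gamma\Z)^k$ and the membership condition decouples coordinatewise. Since $B_r\subseteq\Z^k$ is the cube $\{(\alpha_1,\ldots,\alpha_k):\abs{\alpha_i}\le r\}$, we have $\abs{B_r}=(2r+1)^k$, while in each coordinate there are exactly $2\lfloor r/\gamma\rfloor+1$ multiples of $\gamma$ in $[-r,r]$, so $\abs{\gamma\Z^k\cap B_r}=(2\lfloor r/\gamma\rfloor+1)^k$. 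Hence
$$\rho_r(\gamma\Z^k)=\left(\frac{2\lfloor r/\gamma\rfloor+1}{2r+1}\right)^{k}.$$
Part 1) is then immediate: as $r\to\infty$ one has $\lfloor r/\gamma\rfloor/r\to1/\gamma$, so the base tends to $1/\gamma$ and $\rho_r(\gamma\Z^k)\to1/\gamma^k$.

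For part 2) I would set $a=\frac{2\lfloor r/\gamma\rfloor+1}{2r+1}$ and $b=1/\gamma$ and use the factorization $a^k-b^k=(a-b)\sum_{i=0}^{k-1}a^i b^{k-1-i}$. First, writing $q=\lfloor r/\gamma\rfloor$ one has $0\le r-\gamma q<\gamma$, and a direct computation gives $a-b=\frac{2\gamma q+\gamma-2r-1}{\gamma(2r+1)}$, whose numerator lies in $(-\gamma-1,\gamma-1]$; hence $\abs{a-b}\le\frac{2\gamma}{\gamma(2r+1)}<\frac1r$. Second, the hypothesis $r\ge\gamma$ gives $\frac{2}{2r+1}<\frac1\gamma$, so $a<b+\frac1\gamma=\frac2\gamma$, and therefore each of the $k$ summands satisfies $a^i b^{k-1-i}<\frac{2^{i}}{\gamma^{k-1}}\le\frac{2^{k-1}}{\gamma^{k-1}}$. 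Multiplying the two bounds gives $\abs{\rho_r(\gamma\Z^k)-\gamma^{-k}}<\frac{k\,2^{k-1}}{r\gamma^{k-1}}$, which is comfortably inside the claimed bound $\frac{2^{k+1}k}{r\gamma^{k-1}}$.

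For part 3) I would combine part 2) with a crude bound in the remaining range. If $r\ge\gamma$, part 2) yields $\abs{\rho_r(\gamma\Z^k)-\gamma^{-k}}\le\frac{2^{k+1}k}{r\gamma^{k-1}}\le\frac{2^{k+1}k}{r}$ since $\gamma\ge1$. If instead $1\le r<\gamma$, then $\lfloor r/\gamma\rfloor=0$, so $\rho_r(\gamma\Z^k)=(2r+1)^{-k}$ and $\gamma^{-k}\le(r+1)^{-k}$; both quantities are at most $(r+1)^{-k}\le\frac1r$, whence $\abs{\rho_r(\gamma\Z^k)-\gamma^{-k}}\le\frac1r$. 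So for every $r\ge1$ and every $\gamma\in\N^+$ the difference is bounded by $\frac{2^{k+1}k}{r}$, a quantity independent of $\gamma$ that tends to $0$ as $r\to\infty$; that is exactly uniform convergence in $\gamma$.

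I do not expect any step to be a genuine obstacle — the argument is entirely elementary — and the only point requiring a little care is the bookkeeping with the floor function and with the hypothesis $r\ge\gamma$, which is precisely what legitimizes the estimate $a<2/\gamma$, and hence the $\gamma^{-(k-1)}$ factor, in part 2).
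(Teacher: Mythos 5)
Your proof is correct and follows essentially the same route as the paper: both start from the exact count $\rho_r(\gamma\Z^k)=\left(\frac{2\lfloor r/\gamma\rfloor+1}{2r+1}\right)^k$ and bound the deviation from $\gamma^{-k}$ by factoring a difference of $k$th powers, using $r\ge\gamma$ to control the $k$ cross terms by $2^{k-1}/\gamma^{k-1}$ and the linear factor by $O(1/r)$. The only cosmetic difference is that the paper first sandwiches $1/\gamma^k$ between the values of $\rho_r$ at the endpoints of the interval $[\gamma t,\gamma(t+1)-1]$ and then estimates $y^k-x^k$, whereas you estimate $\left|\frac{2\lfloor r/\gamma\rfloor+1}{2r+1}-\frac1\gamma\right|$ directly, which is a slightly more streamlined version of the same bookkeeping.
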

\begin{proof} Notice that $|\gamma \Z^k \cap B_r| = (2 \lfloor r/\gamma \rfloor  + 1)^k $, where  $\lfloor r/\gamma \rfloor$ is the integral part of $r/\gamma$.
Put $t = [r/\gamma ] \in N.$ Then
$$\rho_r(\gamma \Z^k) = \rho_{\gamma t}(\gamma \Z^k) = \frac{(2t + 1)^k}{(2r + 1)^k}.$$
 Clearly, the function $ r \rightarrow \rho_r(\gamma \Z^k)$ is decreasing on the segment  $[\gamma t,\gamma (t+1)- 1]$. Denote
$$ x = \frac{2t + 1}{2\gamma (t+1) - 1}, \ \ \ y = \frac{2t+1}{2\gamma t + 1}.$$
  It is easy to check that $x \leq  1/\gamma  \leq y ,$ hence
 \begin{equation}
 \label{eq:d-gamma-1}
  \rho_{\gamma (t+1) - 1}(\gamma \Z^k)   \leq \frac{1}{\gamma^k } \leq \rho_{\gamma t} (\gamma \Z^k).
\end{equation}
To estimate $|\rho_r(\gamma \Z^k) - 1/\gamma ^k|$ consider
\begin{equation}
 \label{eq:d-gamma-2}
\rho_{\gamma t} (\gamma \Z^k) - \rho_{\gamma (t+1) - 1} (\gamma
\Z^k) =  y^k - x^k = ( y - x)(y^{k-1} + y^{k-2}x + ... + x^{k-1}).
\end{equation}
Observe  that
 \begin{equation}
 \label{eq:d-gamma-3}
 0 < x \leq y  \leq \frac{2}{\gamma } \ \ and \ \    0 \leq y - x  \leq \frac{4}{r}
 \end{equation}
  for every  $r \in  [\gamma t, 2\gamma (t+1) - 1].$ Now it follows from  (\ref{eq:d-gamma-1}),
  (\ref{eq:d-gamma-2}), and (\ref{eq:d-gamma-3}) that
$$|\rho_r(\gamma \Z^k) - \frac{1}{\gamma ^k}| \leq \rho_{\gamma t }(\gamma \Z^k) -
\rho_{\gamma (t+1)-1}(\gamma \Z^k) = $$
$$( y - x)(y^{k-1} + y^{k-2}x + ... + x^{k-1})
\leq \frac{2^{k+1}k}{r\gamma ^{k-1}}, $$
as claimed. This proves 2).

To see 3) notice that from 2) for every $r \geq \gamma$  (since $\gamma \geq 1$) one has
$$|\rho_r(\gamma \Z^k) - \frac{1}{\gamma ^k}| \leq \frac{2^{k+1}k}{r}. $$
 On the other hand, for $r < \gamma$
$$|\rho_r(\gamma \Z^k) - \frac{1}{\gamma ^k}|  = |\frac{1}{(2r+1)^k} - \frac{1}{\gamma ^k}| \leq \frac{2}{r^k},$$
 and  3) follows.
 \end{proof}

 The first part of the  following proposition is well-known in number theory
 (in the full generality it is due to Christopher~\cite{Chr}, see also~\cite{Mert, HW}
 for earlier partial results).  Since we need estimates on the convergence rates we prove the result
 and provide the estimates.

\begin{proposition}
\label{pr:2.5} Let $\gamma , k \in N^+, k \geq 2.$ Then
  \begin{itemize}
 \item [1)] For every $\varepsilon > 0$ there
exists $r(\varepsilon ) \in N^+$ such that
$$|\rho_r(P_{k,\gamma }) - \frac{1}{\gamma ^k
\zeta (k)}| \leq \frac{\varepsilon}{\gamma^{k-1}} $$
\noindent for every $r \geq r(\varepsilon )$ and any $ \gamma \leq r.$
\item [2)]
  $\rho_r(P_{k,\gamma})$ converges to $\frac{1}{\gamma^k \zeta (k)}$ uniformly in $\gamma$.

\end{itemize}
\end{proposition}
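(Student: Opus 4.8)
The plan is to reduce the statement to the rank-one case $\gamma=1$ by a rescaling bijection, and then to prove a quantitative form of $\rho_s(P_{k,1})\to 1/\zeta(k)$ by Möbius inversion over the gcd together with Proposition~\ref{pr:gamma-Z-k}. For the reduction, note that multiplication by $\gamma$ is a bijection from $P_{k,1}\cap B_{\lfloor r/\gamma\rfloor}$ onto $P_{k,\gamma}\cap B_r$: a vector $v$ divisible by $\gamma$ lies in $B_r$ precisely when $v/\gamma$ lies in $B_{\lfloor r/\gamma\rfloor}$, and $\gcd(v)=\gamma$ iff $\gcd(v/\gamma)=1$. Since $\abs{\gamma\Z^k\cap B_r}=(2\lfloor r/\gamma\rfloor+1)^k$, dividing the resulting identity $\abs{P_{k,\gamma}\cap B_r}=\abs{P_{k,1}\cap B_{\lfloor r/\gamma\rfloor}}$ by $(2r+1)^k$ gives the key factorization
$$\rho_r(P_{k,\gamma})=\rho_{\lfloor r/\gamma\rfloor}(P_{k,1})\cdot\rho_r(\gamma\Z^k).$$

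First I would establish a rate $\abs{\rho_s(P_{k,1})-1/\zeta(k)}\le\epsilon_k(s)$ with $\epsilon_k(s)\to0$. Möbius inversion gives $\abs{P_{k,1}\cap B_s}=\sum_{d=1}^{s}\mu(d)\bigl((2\lfloor s/d\rfloor+1)^k-1\bigr)$, the $-1$ excluding the origin, so $\rho_s(P_{k,1})=\sum_{d=1}^{s}\mu(d)\,\rho_s(d\Z^k)+O(s^{1-k})$, while $1/\zeta(k)=\sum_{d\ge1}\mu(d)d^{-k}$. Subtracting the two series, bounding $\abs{\rho_s(d\Z^k)-d^{-k}}\le 2^{k+1}k/(sd^{k-1})$ for $1\le d\le s$ by part~2) of Proposition~\ref{pr:gamma-Z-k}, and controlling the tail $\sum_{d>s}d^{-k}$ by an integral, yields an explicit $\epsilon_k(s)$ that is $O((\log s)/s)$ for $k=2$ and $O(1/s)$ for $k\ge3$; in particular $C_k:=\sup_{s\ge1}\epsilon_k(s)<\infty$.

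Now I would assemble the estimate. With $a=\rho_{\lfloor r/\gamma\rfloor}(P_{k,1})$ and $b=\rho_r(\gamma\Z^k)$, write $ab-\tfrac{1}{\gamma^k\zeta(k)}=b\bigl(a-\tfrac1{\zeta(k)}\bigr)+\tfrac1{\zeta(k)}\bigl(b-\tfrac1{\gamma^k}\bigr)$. For $1\le\gamma\le r$ one has $2\lfloor r/\gamma\rfloor+1\le 3r/\gamma$, so $b\le(3/2)^k\gamma^{-k}$; combining this with part~2) of Proposition~\ref{pr:gamma-Z-k} applied to the last factor and multiplying through by $\gamma^{k-1}$ gives, for every $\gamma\le r$,
$$\gamma^{k-1}\Bigl|\rho_r(P_{k,\gamma})-\frac1{\gamma^k\zeta(k)}\Bigr|\ \le\ \frac{(3/2)^k}{\gamma}\,\epsilon_k(\lfloor r/\gamma\rfloor)\ +\ \frac{2^{k+1}k}{\zeta(k)\,r}.$$
Given $\varepsilon>0$, the last term is $<\varepsilon/2$ once $r$ is large. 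For the first term I split on the size of $\gamma$: if $\gamma\ge\Gamma_0:=4(3/2)^kC_k/\varepsilon$ then it is at most $(3/2)^kC_k/\Gamma_0<\varepsilon/2$; if $\gamma<\Gamma_0$ then $\lfloor r/\gamma\rfloor\ge\lfloor r/\Gamma_0\rfloor\to\infty$ as $r\to\infty$, so choosing $r$ large enough that $\epsilon_k\le\varepsilon/(2(3/2)^k)$ on $[\lfloor r/\Gamma_0\rfloor,\infty)$ makes it at most $(3/2)^k\epsilon_k(\lfloor r/\gamma\rfloor)\le\varepsilon/2$. This proves 1). For 2), part~1) bounds the deviation by $\varepsilon/\gamma^{k-1}\le\varepsilon$ whenever $\gamma\le r$; and if $\gamma>r$ then $B_r$ contains no $\gamma$-primitive vector, so $\rho_r(P_{k,\gamma})=0$ while $0\le(\gamma^k\zeta(k))^{-1}<(r^k\zeta(k))^{-1}<\varepsilon$ for $r$ large, whence $\sup_{\gamma\ge1}\abs{\rho_r(P_{k,\gamma})-(\gamma^k\zeta(k))^{-1}}\le\varepsilon$.

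The main obstacle is the regime where $\gamma$ is comparable to $r$: there $\lfloor r/\gamma\rfloor$ stays bounded, so the rank-one rate $\epsilon_k$ is useless. The resolution is that in that same regime $\rho_r(\gamma\Z^k)$ is already as small as $\gamma^{-k}$, so after weighting by $\gamma^{k-1}$ the offending term carries a spare factor $\gamma^{-1}$ and is small for large $\gamma$ — which is exactly what the two-case split exploits. The only real bookkeeping is carrying the weight $\gamma^{k-1}$ through all the estimates so that the final error genuinely has the form $O(\varepsilon/\gamma^{k-1})$ rather than merely $o(1)$.
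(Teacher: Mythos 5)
Your proof is correct. The key factorization is identical to the paper's: from $P_{k,\gamma}\cap B_r=\gamma\bigl(P_{k,1}\cap B_{\lfloor r/\gamma\rfloor}\bigr)$ you get $\rho_r(P_{k,\gamma})=\rho_{\lfloor r/\gamma\rfloor}(P_{k,1})\cdot\rho_r(\gamma\Z^k)$, and the structure of the conclusion is the same two-regime split (small $\gamma$: use the $\gamma=1$ rate at the large scale $\lfloor r/\gamma\rfloor$; large $\gamma$: exploit the spare factor of $\gamma^{-1}$ surviving after weighting by $\gamma^{k-1}$). Where you diverge from the paper is in how you obtain a usable rate for the $\gamma=1$ case. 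The paper takes the qualitative convergence from Proposition~\ref{pr:Chris}, freezes a threshold $s$ with $\abs{\rho_r(P_{k,1})-1/\zeta(k)}\le\varepsilon/3$ for $r\ge s$, sets $\delta=\max_{r\le s}\abs{\rho_r(P_{k,1})-1/\zeta(k)}$ over the finite exceptional range, and chooses $\mu$ with $\delta/\mu,\,2^{k+1}k/\mu\le\varepsilon/3$; whereas you re-derive a fully quantitative rate $\epsilon_k(s)=O((\log s)/s)$ (resp.~$O(1/s)$) by Möbius inversion plus Proposition~\ref{pr:gamma-Z-k}, so your $C_k=\sup_s\epsilon_k(s)$ plays the role of the paper's $\delta$ and your $\Gamma_0$ the role of $\mu$. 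The two are logically interchangeable here: the paper's compactness trick is lighter and leans on the cited Christopher result, while your Möbius computation is self-contained (it in fact re-proves Proposition~\ref{pr:Chris} with an explicit error term), at the cost of an extra page of bookkeeping. Your algebraic split $ab-\tfrac{1}{\gamma^k\zeta(k)}=b(a-\tfrac1{\zeta(k)})+\tfrac1{\zeta(k)}(b-\tfrac1{\gamma^k})$ is a two-term variant of the paper's three-term expansion $(\tfrac1{\zeta(k)}+\varepsilon(t))(\tfrac1{\gamma^k}+\eta)-\tfrac1{\gamma^k\zeta(k)}$ and requires the crude bound $b\le(3/2)^k\gamma^{-k}$, which you correctly supply. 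One small remark: for part~2) in the regime $\gamma>r$ you correctly observe $\rho_r(P_{k,\gamma})=0$ (the paper's written expression $1/(2r+1)^k$ there appears to be a slip; your value is the right one and only makes the bound easier).
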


\begin{proof}
Let $\gamma , k \in N^+, k \geq 2$, and $r \geq \gamma$. Observe, first, that
$$P_{k, \gamma } \cap B_r = \gamma (P_{k, 1} \cap B_t)$$
\noindent for every $r \in [\gamma t , \gamma (t+1) - 1],$ where $t = \lfloor r/\gamma \rfloor.$

By Proposition \ref{pr:Chris} for any $\varepsilon > 0$ there exists  $s = s(\varepsilon ) \geq 1$ such that for every $r \geq  s $
$$ |\rho_r(P_{k, 1}) - \frac{1}{\zeta (k)}| \leq \varepsilon /3.$$
We fix an arbitrary $\varepsilon$ such that $1 > \varepsilon > 0$.
Let
$$\delta = \delta (s) = max_{r \leq s}\{|\rho_r(P_{k, 1}) - \frac{1}{\zeta (k)}|\}.$$
Then there exists $\mu $ such that
$$\frac{\delta }{\mu } \leq \frac{\varepsilon}{3}, \ \ \ \frac{2^{k+1}k}{\mu } \leq
\frac{\varepsilon}{3}.$$
 We claim that for every $r \geq s \mu $ and for every $\gamma \leq r$
$$
|\rho_r(P_{k, \gamma }) - \frac{1}{\gamma ^k \zeta (k)}| \leq \frac{\varepsilon}{\gamma^{k-1}}.
$$
Indeed, observe that
$$
|\rho_r(P_{k, \gamma }) - \frac{1}{\gamma ^k \zeta (k)} | = \left |\frac{|P_{k,1} \cap B_{t}|}{|B_r|} - \frac{1}
{{\gamma^k} \zeta (k)}   \right  |=
$$
$$
\left |\frac{|P_{k,1} \cap B_{t}|}{|B_{t}|}
\cdot \frac{|B_{t}|}{|B_r|} - \frac{1}{\gamma ^k \zeta (k)}  \right |
$$
and introducing notation
$$
\varepsilon (t) =  \frac{|P_{k,1} \cap B_{t}|}{|B_{t}|} - \frac{1}{\zeta (k)}, \ \
\eta (r, \gamma ) = \frac{|B_{t}|}{|B_r|} -  \frac{1}{\gamma ^k}
$$
one can continue the sequence of the  equalities above
$$= \left |\left(\frac{1}{\zeta (k)} + \varepsilon (t)\right) \cdot \left (\frac{1}{\gamma ^k} +
\eta (r, \gamma ) \right ) - \frac{1}{\gamma ^k \zeta (k)} \right | =$$

$$\left |\frac{\varepsilon (t)}{\gamma ^k} + \varepsilon
(t) \eta (r, \gamma ) + \frac{\eta (r, \gamma )}{\zeta (k)} \right | \leq $$

$$ |\frac{\varepsilon (t)}{\gamma ^k}| + |\varepsilon
(t) \eta (r, \gamma )| + |\frac{\eta (r, \gamma )}{\zeta (k)} |.$$

By Proposition \ref{pr:gamma-Z-k}
\begin{equation}
\label{eq:eta}
 |\eta (r, \gamma )| \leq \frac{2^{k+1}k}{r \gamma ^{k-1}} \leq
\frac{2^{k+1}k}{\mu s \gamma ^{k-1}} \leq \frac{\varepsilon }{3 s\gamma ^{k-1}} \leq \frac{\varepsilon }{3 \gamma ^{k-1}}.
\end{equation}

 Assume first, that $t \geq s.$ Then by our assumption
$$|\varepsilon (t)| \leq \frac{\varepsilon}{3},
\frac{|\varepsilon (t)|}{\gamma ^k} \leq \frac{\varepsilon }{3
\gamma ^k}.$$
 \noindent Therefore
$$ |\varepsilon (t) \eta (r, \gamma )| \leq
\frac{\varepsilon }{3 \gamma ^{k-1}}, \ \ \ \ |\frac{\eta (r, \gamma )}{\zeta (k)}| \leq \frac{\varepsilon
}{3 \gamma ^{k-1} \zeta(k)} \leq \frac{\varepsilon
}{3 \gamma ^{k-1}}.$$
It follows that in this case
\begin{equation}
\label{eq:main-claim}
|\rho_r(P_{k, \gamma }) - \frac{1}{\gamma ^k \zeta (k)}| \leq \frac
{\varepsilon }{\gamma ^{k-1}}.
\end{equation}
Now suppose that $t =\lfloor r/\gamma \rfloor \leq s$.  Since $r \geq s \mu$ one has
$\gamma \geq \mu $.  Therefore
\begin{equation}
\label{eq:epsilon}
|\varepsilon (t)| \leq \delta , \ \ \
|\frac{\varepsilon (t)}{\gamma ^k}| \leq \frac{\delta }{\mu \gamma ^{k-1}} \leq
\frac{\varepsilon }{3 \gamma ^{k-1}}.
\end{equation}

Now from (\ref{eq:eta}) and (\ref{eq:epsilon}) we again have (\ref{eq:main-claim}), as claimed. This proves 1).

To see 2) observe first that for $r < \gamma$ one has
 $$
 |\rho_r(P_{k,\gamma }) - \frac{1}{\gamma ^k
\zeta (k)}| = |\frac{1}{(2r+1)^k} - \frac{1}{\gamma ^k
\zeta (k)}| \leq \frac{1}{(2r+1)^k} + \frac{1}{r^k
\zeta (k)}.
 $$
Combining this with 1) one  gets the required uniform convergence.

\end{proof}

\subsection{Asymptotic properties of one-variable equations}

  Recall, that $SAT(A,1)$ is the set of all
  equations in a single variable $x$ which are satisfiable in $A =
  A(a_1, \ldots,a_m)$. In computations below we use notation
  $$\Z_n(m) = \sum_{\gamma = 1}^n \frac{1}{\gamma^m}.$$
 Clearly, if $m \geq 2$ then $\Z_n(m) \rightarrow \zeta(m)$ when $n \rightarrow
 \infty$,  and $\Z_n(1) \sim \ln n$.

\begin{theorem}
\label{th:abelian-one-var} For $r, m \in \N ^+$
$$\mid \rho_r(SAT(A,1)) - \frac{\Z_r(m)}{r} \mid = O \left (\frac{ \Z_r(m-1)}{r^2} \right ).$$

 \end{theorem}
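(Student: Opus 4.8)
The idea is to use the explicit description of $SAT(A,1)$ from Corollary~\ref{co:SAT(A-1)} to get an exact formula for $|SAT(A,1)\cap B_r|$, and then compare that formula with $\Z_r(m)/r$ by making two elementary replacements, $\lfloor r/\gamma\rfloor\leadsto r/\gamma$ and $(2r+1)^{m+1}\leadsto(2r)^{m+1}$, tracking the error at each step.

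\emph{Step 1 (exact count).} Identify $w\in A_X$ with its normal form $w=x^{\gamma_1}a_1^{\alpha_1}\cdots a_m^{\alpha_m}$. By Corollary~\ref{co:SAT(A-1)}, an element $w\in B_r$ lies in $SAT(A,1)$ iff either $w=1$, or $\gamma_1=\pm\gamma$ for some $\gamma\in\{1,\dots,r\}$ and every $\alpha_i$ is a multiple of $\gamma$ with $|\alpha_i|\le r$. For fixed sign and fixed $\gamma$ there are exactly $(2\lfloor r/\gamma\rfloor+1)^m$ admissible tuples $(\alpha_1,\dots,\alpha_m)$, and these families are pairwise disjoint (different $\gamma$, different sign) and disjoint from $\{1\}$. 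Hence
$$|SAT(A,1)\cap B_r| \;=\; 1 + 2\sum_{\gamma=1}^{r}\bigl(2\lfloor r/\gamma\rfloor+1\bigr)^m,\qquad |B_r|=(2r+1)^{m+1}.$$

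\emph{Step 2 (remove the floor).} Put $u_\gamma=2\lfloor r/\gamma\rfloor+1$ and $v_\gamma=2r/\gamma$. Then $|u_\gamma-v_\gamma|\le 1$, and since $r/\gamma\ge 1$ for $\gamma\le r$ we get $u_\gamma,v_\gamma\le 3r/\gamma$. Factoring $u_\gamma^m-v_\gamma^m=(u_\gamma-v_\gamma)\sum_{j=0}^{m-1}u_\gamma^{j}v_\gamma^{m-1-j}$ gives $|u_\gamma^m-v_\gamma^m|\le m\,3^{m-1}(r/\gamma)^{m-1}$, and summing over $\gamma\le r$, together with $\sum_{\gamma\le r}(2r/\gamma)^m=2^m r^m\Z_r(m)$, yields
$$\Bigl|\,\textstyle\sum_{\gamma=1}^{r}u_\gamma^m - 2^m r^m\Z_r(m)\Bigr|\;\le\; m\,3^{m-1}\,r^{m-1}\,\Z_r(m-1).$$

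\emph{Step 3 (replace the denominator) and assembly.} From $0\le(2r+1)^{m+1}-(2r)^{m+1}\le C_m r^m$ one gets $\bigl|\tfrac{2^{m+1}r^m}{(2r+1)^{m+1}}-\tfrac1r\bigr|\le C'_m/r^2$. Combining with Steps 1--2,
$$\rho_r(SAT(A,1))=\frac{1+2\sum_\gamma u_\gamma^m}{(2r+1)^{m+1}}=\frac{\Z_r(m)}{r}+O\!\Bigl(\tfrac{1}{r^{m+1}}\Bigr)+O\!\Bigl(\tfrac{r^{m-1}\Z_r(m-1)}{(2r+1)^{m+1}}\Bigr)+O\!\Bigl(\tfrac{\Z_r(m)}{r^2}\Bigr),$$
where the constants depend only on $m$. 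Using $(2r+1)^{m+1}\ge(2r)^{m+1}$, $1\le\Z_r(m-1)$, and $\Z_r(m)\le\Z_r(m-1)$, each of the three error terms is $O\bigl(\Z_r(m-1)/r^2\bigr)$, which proves the claim.

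\emph{Expected difficulty.} There is no real obstacle here: the argument is pure bookkeeping of error terms. The only points that need care are getting the combinatorial count in Step 1 exactly right (the factor $2$ for the two signs of $\gamma_1$, and the disjointness with the trivial element), and recognizing in Step 2 that the floor-removal error is governed precisely by $\sum_{\gamma\le r}(r/\gamma)^{m-1}=r^{m-1}\Z_r(m-1)$ — this is why $\Z_r(m-1)$, and not something smaller, appears on the right-hand side (e.g.\ it degenerates to $\Z_r(0)=r$, so the bound is merely $O(1/r)$, when $m=1$).
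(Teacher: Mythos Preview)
Your proof is correct and follows essentially the same line as the paper's: exact count via Corollary~\ref{co:SAT(A-1)}, then replace $\lfloor r/\gamma\rfloor$ by $r/\gamma$ and track the error, which is governed by $r^{m-1}\Z_r(m-1)$. The only cosmetic difference is that the paper packages the floor-removal estimate as an appeal to Proposition~\ref{pr:gamma-Z-k} (writing the count as $\tfrac{2}{2r+1}\sum_{\gamma}\rho_r(\gamma\Z^m)$ and using $|\rho_r(\gamma\Z^m)-\gamma^{-m}|\le 2^{m+1}m/(r\gamma^{m-1})$), whereas you redo that estimate inline and add an explicit denominator-swap $(2r+1)^{m+1}\leadsto(2r)^{m+1}$; the content is the same.
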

  \begin{proof}
One-variable equations over $A$ can be written in the form $x^\gamma a = 1,$ where $a \in A$,
$\gamma \in \Z$. Let $\pi:A_X \to A$ be the canonical projection, so $\pi(x^\gamma a) =
a$.

Let $r \in \N ^+$ and $B_r$  be the ball of radius $r$ in $A_X$. Our goal is to estimate the ratio
 $$\rho_r(Sat(A,1)) = \frac{|B_r \cap Sat(A,1)|}{|B_r|} = \frac{|B_r \cap Sat(A,1)|}{(2r+1)^{m+1}}.$$
   To estimate $|B_r \cap Sat(A,1)|$ we
  partition  $B_r$ as  a disjoint union of its semi-slices $C_{r, \gamma } = \{x^\gamma a  \mid ||x^\gamma a|| \leq r, a \in A\}$:
  $$B_r = \cup_{\gamma = -r}^r C_{r,\gamma},$$
 \noindent
 and estimate the
 input of each semi-slice. Note that for $\gamma \geq 0$ the slice $B_{r, \gamma }$ is union of its semi-slices $B_{r, \gamma } = C_{r, \gamma } \cup C_{r, -\gamma }.$

 Notice, that semi-slices $C_{r,\gamma}$ and $C_{r,\delta}$ do not
 intersect for $\gamma \neq \delta$, and for any $\gamma \in [-r,r]$
  \begin{equation}
  \label{eq:B-r-gamma-slice}
 |C_{r,\gamma}| = (2r+1)^m.
 \end{equation}

 Therefore,
\begin{equation} \label{eq:reduction-to-slices} \rho_r(Sat(A,1)) = \frac{|B_r \cap Sat(A,1)|}{|B_r|}
= \sum_{\gamma = -r}^{r}\frac{|C_{r,\gamma} \cap Sat(A,1)|}{(2r + 1)^{m+1}}.
 \end{equation}

From Corollary \ref{co:SAT(A-1)} one can see that
$$
\pi(C_{r,\gamma} \cap Sat(A,1)) = \pi(B_r) \cap \gamma\Z^m,
$$
hence
 $$
|C_{r,\gamma} \cap Sat(A,1)| = |\pi(C_{r,\gamma} \cap Sat(A,1))| = |\pi(B_r) \cap \gamma\Z^m|.
 $$
It follows that
$$
\frac{|C_{r,\gamma} \cap Sat(A,1)|}{(2r+1)^m} =  \rho_r(\gamma \Z^m),
$$
so from (\ref{eq:reduction-to-slices})
 \begin{equation}
 \label{eq:reduction-to-numbers}
\rho_r(Sat(A,1)) = \frac{1}{2r+1}\sum_{\gamma = -r}^{r}\rho_r(\gamma \Z^m).
\end{equation}
  Observe, that
  $$\rho_r(\gamma \Z^m) = \rho_r(-\gamma
  \Z^m), \   \ \rho_r(0 \cdot \Z^m) =
  \frac{1}{(2r+1)^m},$$
so
\begin{equation}
\label{eq:almost-precise} \rho_r(Sat(A,1)) = \frac{2}{2r+1}\sum_{\gamma = 1}^{r}\rho_r(\gamma
\Z^m) + \frac{1}{(2r+1)^{m+1}}.
 \end{equation}
By Proposition \ref{pr:gamma-Z-k} for $\gamma \in [1,r]$ one has
 $$
 \mid \rho_r(\gamma \Z^m) - \frac{1}{\gamma^m}\mid \leq
 \frac{2^{m+1}m}{r\gamma ^{m-1}}.
$$
This implies that
 $$
  \sum_{\gamma =
1}^{r}(\frac{1}{\gamma^m} - \frac{2^{m+1}m}{r\gamma ^{m-1}})  \leq \sum_{\gamma = 1}^{r}\rho_r(\gamma
\Z^m) \leq \sum_{\gamma = 1}^{r}(\frac{1}{\gamma^m} + \frac{2^{m+1}m}{r\gamma ^{m-1}}),
$$
hence
 $$
  \Z_r(m) - \frac{2^{m+1}m}{r}\Z_r(m-1)  \leq \sum_{\gamma = 1}^{r}\rho_r(\gamma
\Z^m) \leq \Z_r(m) + \frac{2^{m+1}m}{r}\Z_r(m-1).
$$
Combining this with (\ref{eq:almost-precise}) one gets after standard  manipulations
$$\mid \rho_r(SAT(A,1)) - \frac{\Z_r(m)}{r} \mid =
O(\frac{ \Z_r(m-1)}{r^2}).$$
 This proves the theorem.
\end{proof}

\begin{corollary}
\label{co:estimates} The set $Sat(A,1)$ is  negligible. Moreover,
$$
 \rho_r(SAT(A,1)) =
\left\{
\begin{array}{ll}
O(\frac{ln r}{r})  & \mbox{if } m = 1, \\
O(\frac{1}{r}) & \mbox{if } m \geq  2.
\end{array}
\right.
$$

\end{corollary}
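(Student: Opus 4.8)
The plan is to derive the statement directly from Theorem~\ref{th:abelian-one-var}, using only the elementary asymptotics of the partial zeta sums $\Z_r(m)$ recorded just before that theorem. First I would collect the facts I need: for $m \geq 2$ one has $\Z_r(m) \to \zeta(m)$ and hence $\Z_r(m) = O(1)$; for $m = 1$ one has $\Z_r(1) \sim \ln r$; and for $m = 0$ one has the trivial identity $\Z_r(0) = \sum_{\gamma = 1}^{r} 1 = r$. These three facts cover every value of $m-1$ that can appear in the error term $O(\Z_r(m-1)/r^2)$ of Theorem~\ref{th:abelian-one-var}.

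Next I would split into the two cases of the statement. For $m = 1$, Theorem~\ref{th:abelian-one-var} gives $\rho_r(SAT(A,1)) = \Z_r(1)/r + O(\Z_r(0)/r^2) = O(\ln r / r) + O(r/r^2) = O(\ln r / r)$, since $O(1/r)$ is absorbed by $O(\ln r / r)$. For $m \geq 2$, the main term is $\Z_r(m)/r = O(1/r)$; for the error term I would note that $\Z_r(m-1)/r^2 = O(1/r^2)$ when $m \geq 3$ (because then $m - 1 \geq 2$ and $\Z_r(m-1) \to \zeta(m-1)$), while for $m = 2$ the error is $O(\Z_r(1)/r^2) = O(\ln r / r^2)$. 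In both subcases the error is $O(1/r)$, so $\rho_r(SAT(A,1)) = O(1/r)$ for $m \geq 2$, as claimed.

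Finally, in either case $\rho_r(SAT(A,1)) \to 0$ as $r \to \infty$, so $\rho(SAT(A,1)) = 0$ and $SAT(A,1)$ is negligible; this gives the first sentence of the corollary. The only point that requires any attention — and it is hardly an obstacle — is verifying that the error term of Theorem~\ref{th:abelian-one-var} is genuinely dominated by the stated bound in the two boundary cases, namely $m = 1$, where $\Z_r(m-1) = \Z_r(0) = r$ grows linearly, and $m = 2$, where $\Z_r(m-1) = \Z_r(1)$ grows like $\ln r$; in the first case the error is $O(1/r)$, absorbed into $O(\ln r / r)$, and in the second it is $O(\ln r / r^2) = O(1/r)$, matching the claimed order. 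Everything else is a direct substitution.
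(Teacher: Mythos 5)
Your proof is correct and follows essentially the same route as the paper: substitute the elementary asymptotics of $\Z_r(m)$ into the estimate of Theorem~\ref{th:abelian-one-var} and read off the two cases. The only difference is that you spell out the boundary cases $m=1$ and $m=2$ for the error term $O(\Z_r(m-1)/r^2)$ more explicitly, which the paper leaves implicit; this is a harmless elaboration, not a different argument.
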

  \begin{proof}
  If $m = 1$ then $\Z_r(m) \sim \ln
 r$, and $\Z_r(m-1) \sim r$,
 and the result follows.

  If $m \geq 2$ then $\Z_r(m) \rightarrow \zeta(m) <
 \infty$, and the result follows, as well.

  \end{proof}

\subsection{Asymptotic properties of multi-variable  equations}

 The results are different for equations with two or more variables.
 Let, as above, $A = A(a_1, \ldots,a_m)$, $X = \{x_1, \ldots,
 x_k\}$, and $A_X = A \times A(X)$. Denote by $\pi_A$ and $\pi_X$
 the projections of $A_X$ onto the corresponding  factors.
  Formally, the balls $B_r$, as well as the sets
 $P_{k,\gamma}$ and $\gamma \Z^k$, are defined only for abelian
 groups of the type $\Z^k$.  However, if $G$ is a free
 abelian group with a fixed basis $Y$, then we extend this notions
 to $G$ by identifying $G$ with $\Z^{|Y|}$ via the canonical representation
 of elements of $G$ in the basis $Y$.  To indicate which basis is
 used for identification we add a superscript $Y$ to the notation:
 $B_r^{(Y)}$, $P_{k,\gamma}^{(Y)}$, $\gamma \Z^{|Y|}$.

 \begin{theorem}
 \label{th:abelian-two-var}
 Let $k \geq 2.$ Then the strict asymptotic density of the set $Sat(A,k)$ exists and
 \begin{equation}
 \label{eq:Sat-A-k}
  \rho (SAT(A,k)) = \frac{ \zeta (k+m)}{\zeta (k)}.
  \end{equation}
  \end{theorem}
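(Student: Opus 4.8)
The plan is to compute $\rho(SAT(A,k))$ by slicing the ball $B_r$ in $A_X$ according to the exponent $\gamma = exp(w)$ of $w$, exactly as in the one-variable case but now using the finer count of $\gamma$-primitive $k$-tuples rather than just $\gamma$-tuples. First I would record the satisfiability criterion from Lemma~\ref{le:solutions-abelian}: for $w$ with $exp(w) = \gamma \neq 0$, the equation $w=1$ is satisfiable in $A$ iff $\pi_A(w) \in \gamma\Z^m$; and the $\gamma = 0$ contribution is negligible (it lives in a proper subgroup, so its density is $O(1/r^k)$ or smaller). Projecting via $\pi_X$, the elements of $B_r \cap SAT(A,k)$ with exponent exactly $\gamma$ correspond to pairs consisting of a $\gamma$-primitive element in $P_{k,\gamma}^{(X)} \cap B_r$ together with an element of $\gamma\Z^m \cap B_r$ in the $A$-coordinates. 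Since $A_X = A \times A(X)$ and the norm is the max-norm on the combined basis, $B_r = B_r^{(X)} \times B_r^{(Y_A)}$ splits as a product and these two counts multiply.

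Consequently I would write, for $r \in \N^+$,
\begin{equation}
\label{eq:plan-decomp}
\rho_r(SAT(A,k)) = \sum_{\gamma = 1}^{r} \rho_r\!\left(P_{k,\gamma}^{(X)}\right)\, \rho_r\!\left(\gamma\Z^m\right) + O\!\left(\frac{1}{r^k}\right),
\end{equation}
where the error term absorbs the $\gamma = 0$ slice. The main work is then to pass to the limit inside the sum. By Proposition~\ref{pr:2.5}, $\rho_r(P_{k,\gamma}^{(X)}) \to \frac{1}{\gamma^k \zeta(k)}$ and by Proposition~\ref{pr:gamma-Z-k}, $\rho_r(\gamma\Z^m) \to \frac{1}{\gamma^m}$, so the $\gamma$-th term tends to $\frac{1}{\gamma^{k+m}\zeta(k)}$, and the formal sum is $\frac{1}{\zeta(k)}\sum_{\gamma \geq 1}\frac{1}{\gamma^{k+m}} = \frac{\zeta(k+m)}{\zeta(k)}$. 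To make this rigorous I would use the uniform-in-$\gamma$ convergence statements (part 2 of Proposition~\ref{pr:gamma-Z-k} and part 2 of Proposition~\ref{pr:2.5}), together with the crude uniform bounds $\rho_r(P_{k,\gamma}^{(X)}) \leq \rho_r(\gamma\Z^k) \leq \frac{C}{\gamma^k}$ valid for all $r \geq \gamma$ (and $=(2r+1)^{-k}$ for $r<\gamma$), which give a summable-in-$\gamma$ dominating function for the terms of \eqref{eq:plan-decomp}. A dominated-convergence / split-the-tail argument (choose $N$ so that $\sum_{\gamma > N}\frac{C}{\gamma^{k+m}} < \varepsilon$, then let $r \to \infty$ on the finite head $\gamma \leq N$) then yields $\lim_{r\to\infty}\rho_r(SAT(A,k)) = \frac{\zeta(k+m)}{\zeta(k)}$.

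The step I expect to be the main obstacle is the interchange of limit and summation in \eqref{eq:plan-decomp}: the number of summands grows with $r$, so one genuinely needs the uniformity provided by the convergence-rate estimates in Propositions~\ref{pr:gamma-Z-k} and~\ref{pr:2.5} — this is precisely why those quantitative refinements were proved. A secondary point requiring care is verifying the product decomposition of the count: one must check that $|\{w \in B_r : exp(w) = \gamma,\ w \in SAT(A,k)\}| = |P_{k,\gamma}^{(X)} \cap B_r^{(X)}|\cdot|\gamma\Z^m \cap B_r^{(Y_A)}|$ exactly, which follows because the $X$-coordinates and $A$-coordinates of $w$ range independently within the respective balls and the satisfiability condition constrains them separately (the $X$-part to be $\gamma$-primitive, the $A$-part to lie in $\gamma\Z^m$); there is no cross-interaction, unlike the non-abelian situation treated later. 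Finally I would note that since the limit exists, the strict asymptotic density is well-defined, completing the proof.
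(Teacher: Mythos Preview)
Your proposal is correct and follows essentially the same route as the paper: slice $B_r$ by the exponent $\gamma$, use Lemma~\ref{le:solutions-abelian} to get the exact product count $|P_{k,\gamma}^{(X)}\cap B_r^{(X)}|\cdot|\gamma\Z^m\cap B_r^{(A)}|$ on each slice, and then pass to the limit using the quantitative convergence rates of Propositions~\ref{pr:gamma-Z-k} and~\ref{pr:2.5}. The only cosmetic difference is that the paper expands $(\tfrac{1}{\gamma^k\zeta(k)}+\varepsilon(r,\gamma))(\tfrac{1}{\gamma^m}+\delta(r,\gamma))$ and bounds each of the three cross-term sums directly, whereas you package the same estimates as a summable dominating function and invoke a split-the-tail/dominated-convergence argument; both arguments rely on exactly the same inequalities and yield the same conclusion.
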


 \begin{proof}
Let $B_r$ be the ball of radius $r$ in $A_X$. Our goal is to estimate the ratio
 $$\frac{\mid SAT(A,k) \cap B_r\mid}{\mid B_r\mid} = \frac{\mid SAT(A,k) \cap B_r\mid}{(2r+1)^{k+m}}.$$
 To estimate $\mid SAT(A,k) \cap B_r\mid$ we partition
$B_r$ into slices
$$
B_r = \cup _{\gamma = 0}^{r}B_{r,\gamma},
$$
where
$$
B_{r,\gamma} = \{w \in B_r \mid exp(w) = \gamma\}.
$$
Directly from the definition we have
$$
B_{r,\gamma} = P_{r,\gamma}^{(X)} \times B_r^{(A)},
$$
where $P_{r,\gamma}^{(X)}$ is the set of all $\gamma$-primitive elements in $B_r^{(X)}$, and
$B_r^{(A)}$ is the ball of radius $r$ in $A$. By Lemma \ref{le:solutions-abelian}
$$
SAT(A,k) \cap B_{r,\gamma} = P_{r,\gamma}^{(X)} \times (\gamma\Z^{\mid A \mid} \cap
B_r^{(A)}).
$$
If $\gamma = 0$ then $SAT(A,k) \cap B_{r,\gamma} $ contains only the trivial element of $A_X$.

\noindent For $1 \leq \gamma \leq r$ put
$$
\frac{\mid P_{r,\gamma}^{(X)} \mid}{(2r+1)^k} = \frac{1}{\gamma^k\zeta(k)} +
\varepsilon(r,\gamma),
$$
and
$$
\frac{\mid \gamma\Z^{\mid A \mid} \cap B_r^{(A)}) \mid}{(2r+1)^m}  = \frac{1}{\gamma^m} +
\delta(r,\gamma),
$$
Then
\begin{equation}\label{eq:SAT(A,k)}
\frac{\mid SAT(A,k) \cap B_r\mid}{\mid B_r\mid} = \frac{\sum_{\gamma = 0}^r\mid P_{r,\gamma}^{(X)}
\times (\gamma\Z^{\mid A \mid} \cap B_r^{(A)})\mid}  {(2r+1)^{k+m}}
\end{equation}
$$
= \frac{1 + \sum_{\gamma = 1}^r\mid P_{r,\gamma}^{(X)}\mid \cdot \mid \gamma\Z^{\mid A
\mid} \cap B_r^{(A)}\mid}{(2r+1)^{k+m}}
$$
$$
= \frac{1}{(2r+1)^{k+m}} + \sum_{\gamma = 1}^r (\frac{1}{\gamma^k\zeta(k)} +
\varepsilon(r,\gamma)) (\frac{1}{\gamma^m} + \delta(r,\gamma))
$$
$$
= \frac{1}{(2r+1)^{k+m}} + \frac{1}{\zeta(k)}\sum_{\gamma = 1}^r \frac{1}{\gamma^{k+m}} +
\frac{1}{\zeta(k)}\sum_{\gamma = 1}^r \frac{1}{\gamma^{k}} \delta(r,\gamma) +  $$
$$
\sum_{\gamma = 1}^r \varepsilon(r,\gamma) \frac{1}{\gamma^m} + \sum_{\gamma = 1}^r
\varepsilon(r,\gamma)\delta(r,\gamma).
$$
The second summand above converges to $\frac{\zeta(k+m)}{\zeta(k)}$ and  all others go to $0$ when
$r \rightarrow \infty$. Indeed, by Proposition \ref{pr:2.5} for every $\varepsilon > 0$  there exists $r(\varepsilon)$ such that  for any $r \geq r(\varepsilon)$
$$
 |\varepsilon(r,\gamma)|   \leq \frac{\varepsilon}{\gamma^{k-1}},
$$
 whereas, by Proposition \ref{pr:gamma-Z-k}
$$
\mid \delta(r,\gamma) \mid \leq \frac{2^{m+1}m}{r\gamma ^{m-1}}.
$$
Therefore,
$$ \frac{1}{\zeta(k)}\sum_{\gamma = 1}^r \frac{1}{\gamma^{k}} \delta(r,\gamma) +
\sum_{\gamma = 1}^r \varepsilon(r,\gamma) \frac{1}{\gamma^m} + \sum_{\gamma = 1}^r
\varepsilon(r,\gamma)\delta(r,\gamma) \leq
$$
$$ \frac{1}{\zeta(k)}\sum_{\gamma = 1}^r \frac{1}{\gamma^{k}} \frac{2^{m+1}m}{r\gamma ^{m-1}} +
\sum_{\gamma = 1}^r \frac{\varepsilon}{\gamma^{k-1}} \frac{1}{\gamma^m} + \sum_{\gamma = 1}^r
\frac{\varepsilon}{\gamma^{k-1}} \frac{2^{m+1}m}{r\gamma ^{m-1}} \leq
$$
$$ \frac{1}{\zeta(k)}\sum_{\gamma = 1}^r \frac{1}{\gamma^{k}} \frac{2^{m+1}m}{r\gamma ^{m-1}} +
\sum_{\gamma = 1}^r \frac{\varepsilon}{\gamma^{k-1}} \frac{1}{\gamma^m} + \sum_{\gamma = 1}^r
\frac{\varepsilon}{\gamma^{k-1}} \frac{2^{m+1}m}{r\gamma ^{m-1}} =
$$
$$  \frac{2^{m+1}m}{\zeta(k) r}\Z_r(k+m-1)  +
 \varepsilon \Z_r(k+m-1)  +  \frac{\varepsilon 2^{m+1}m}{r} \Z_r(k+m-2)
$$
and the result follows since $k \geq 2$ and $m \geq 1$.
This proves the theorem.

\end{proof}

\section{Equations in free nilpotent groups}
 \label{sec:nilpotent}

Let ${\mathcal N} = {\mathcal N}_c$ be the variety of all nilpotent groups of class at most $c$.  $\CN$ is defined by the law $\gamma_c(x_1, \ldots,x_{c+1}) = 1$, where  $\gamma_{c+1} = [x_1, \ldots,x_{c+1}] $ is the left-normed $c+1$-fold commutator defined inductively as $\gamma_{c+1} = [\gamma_c,x_{c+1}]$, $\gamma_1 = x_1$. For a group $H$ by $\gamma_i(H)$ we denote the verbal subgroup in $H$  generated by values of the word $\gamma_i$ in $H$ and by $Z(H)$ - the center of $H$.

 In a nilpotent group $G$ of class $c$ the  subgroups $G_i = \gamma_i(G)$ form the so-called  {\em lower central series}:
\begin{equation} \label{eq:lower-central}
G = G_1 > \ldots > G_c > G_{c+1} = 1.
\end{equation}

We fix notation for the following groups:  $N_c(X) = F(X)/\gamma_{c+1}(F(X))$ - free nilpotent of class $c$ with basis $X = \{x_1, ...,x_k\} \ (k \geq 1)$; $G$ - a  finitely generated   nilpotent group of class $c$;  
$G_X$ - a free product $G \ast_{\mathcal N} N_c(X)$   of $G$ and $N_c(X)$ in the variety ${\mathcal N}$, i.e., $G_X = (G \ast  N_c(X))/\gamma_{c+1}(G \ast  N_c(X))$. 

Notice, that the standard epimorphism $ G \ast  N_c(X) \to G \times N_c(X)$ factors through the quotient $(G \ast  N_c(X))/\gamma_{c+1}(G \ast  N_c(X))$, so the natural homomorphisms $G, N_c(X) \to G \ast_{\mathcal N} N_c(X)$ are embeddings and the canonical epimorphism $G_X \to G \times N_c(X)$ is identical on the subgroups $G, N_c(X)$.

As was mentioned in the introduction we view $G_X$ as the space of all equations in variables $x_1, \ldots, x_k$ with coefficients in $G$.

\subsection{ Malcev bases}
\label{se:Malcev}

In this section we introduce notation and recall some facts about bases and polycyclic presentations  in finitely generated nilpotent groups. The bases were initially introduced by Malcev in \cite{Mal1} and thoroughly studied in \cite{Hall}.  For  a modern exposition we refer to the books \cite{KargMerz,HoltBook,Sims}.  

Let $G$ be a group, $a =(a_1, \ldots,a_n)$  an $n$-tuple of elements in $G$, and $\alpha = (\alpha_1, \ldots, \alpha_n)$ an $n$-tuple of integers. By $a^{\alpha}$ we denote the following product
$$
a^{\alpha} = a_1^{\alpha_1}a_2^{\alpha_2} \ldots a_n^{\alpha_n}.
$$
Concatenation of two tuples $a$ and $b$ is denoted by  $ab$. A 1-tuple $(x)$ is usually identified with $x$.

Recall  that every finitely generated abelian group $A$ is a direct sum of cyclic groups:
$$
A = \langle a_1\rangle \times \ldots \langle a_s\rangle \times \langle b_1\rangle \times \ldots \langle b_t\rangle
$$
where $\langle a_i\rangle$ is an  infinite cyclic, and $\langle b_i\rangle$ is a finite cyclic of order $\omega_i = \omega(b_i)$.

Every element $g \in A$ can uniquely  be represented in the form
\begin{equation} \label{eq:base}
g = a_1^{\alpha_1} \ldots a_s^{\alpha_s}b_1^{\beta_1} \ldots b_t^{\beta_t}
\end{equation}
where $\alpha_i \in \mathbb{Z}$ and $\beta_j \in \{0,1,\ldots, \omega_j-1\}$. We call the tuple 
$$
a = (a_1, \ldots, a_s,b_1,\ldots,b_t)
$$ 
a {\em base} of $A$ and the tuple $\sigma(g) = (\alpha_1, \ldots,\alpha_s,\beta_1, \ldots,\beta_t)$ the  coordinate of $g$ in the base $a$. In this notation we write the equality (\ref{eq:base})  as follows $g = a^{\sigma(g)}$.

One can generalize the notion of  base to polycyclic groups.  Recall that a group $G$ is {\em polycyclic} if there  is a sequence of elements $a_1, \ldots, a_n \in G$ that generates $G$ such that if $G_i$ denotes the subgroup $\langle a_i, \ldots, a_n \rangle$   then  $G_{i+1}$ is normal in $G_i$ for every $i$. In this case 
\begin{equation}\label{eq:polycyclic-series}
G = G_1 \geq G_2 \geq \ldots \geq G_n \geq G_{n+1} = 1
\end{equation}
is termed a {\em polycyclic series} of $G$ and the tuple  $a=(a_1, \ldots, a_n)$ is a {\em base} of $G$.  Denote by $\omega_i$ the order of the group $G_i/G_{i+1}$, here 
$\omega_i = \infty$ if the order is infinite. We refer to the tuple  $\omega(a) = (\omega_1, \ldots, \omega_n)$ as the {\em order } of $a$. Now set  $Z_{\omega_i} = \mathbb{Z}$ if $\omega_i = \infty$ and $Z_{\omega_i} = \{0,1,2, \ldots, \omega_i -1\}$ otherwise. The number $h(G)$ of indices $i$ such that $\omega_i = \infty$ is independent of the choice of the base of $G$, it is called the {\it Hirsch length}  of $G$.

The following result easily follows by  induction on $n$.

\begin{lemma}
Let $a = (a_1, \ldots,a_n)$ be a base of a polycyclic group $G$ of order $\omega(a) = (\omega_1, \ldots, \omega_n)$ . Then for every $g \in G$ there is a unique decomposition of the following form:
\begin{equation}\label{eq:coordinates}
g = a_1^{\alpha_1} a_2^{\alpha_2} \ldots a_n^{\alpha_n} ,  \ \ \ \alpha_i \in Z_{\omega_i}.
\end{equation}
\end{lemma}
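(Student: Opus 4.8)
The plan is to prove existence and uniqueness of the decomposition (\ref{eq:coordinates}) by induction on $n$, the length of the base. For $n=1$ the group $G$ is cyclic of order $\omega_1$ (possibly infinite) with generator $a_1$, so every element is $a_1^{\alpha_1}$ for a unique $\alpha_1 \in Z_{\omega_1}$; this is exactly the statement and it follows from the elementary structure theory of cyclic groups. For the inductive step, suppose the lemma holds for all polycyclic groups admitting a base of length $n-1$.

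Given a base $a = (a_1, \ldots, a_n)$ of $G$ with polycyclic series $G = G_1 \geq G_2 \geq \cdots \geq G_{n+1} = 1$, observe that $G_2 = \langle a_2, \ldots, a_n\rangle$ is a polycyclic group with base $(a_2, \ldots, a_n)$ of length $n-1$ and order $(\omega_2, \ldots, \omega_n)$, since $G_{i+1}\trianglelefteq G_i$ for all $i$ by hypothesis. The quotient $G_1/G_2$ is cyclic of order $\omega_1$, generated by the image of $a_1$. So first I would, for a given $g \in G$, map $g$ into $G_1/G_2$: its image equals $\bar a_1^{\,\alpha_1}$ for a unique $\alpha_1 \in Z_{\omega_1}$. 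Then $a_1^{-\alpha_1} g \in G_2$, and by the induction hypothesis applied to $G_2$ there is a unique tuple $(\alpha_2, \ldots, \alpha_n)$ with $\alpha_i \in Z_{\omega_i}$ such that $a_1^{-\alpha_1} g = a_2^{\alpha_2}\cdots a_n^{\alpha_n}$. Rearranging gives the desired decomposition $g = a_1^{\alpha_1} a_2^{\alpha_2}\cdots a_n^{\alpha_n}$.

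For uniqueness, suppose $a_1^{\alpha_1}\cdots a_n^{\alpha_n} = a_1^{\alpha_1'}\cdots a_n^{\alpha_n'}$ with all exponents in the respective $Z_{\omega_i}$. Projecting to $G_1/G_2$ and using that $a_2^{\alpha_2}\cdots a_n^{\alpha_n}$ and $a_2^{\alpha_2'}\cdots a_n^{\alpha_n'}$ lie in $G_2$, we get $\bar a_1^{\,\alpha_1} = \bar a_1^{\,\alpha_1'}$ in $G_1/G_2$, whence $\alpha_1 = \alpha_1'$ since both lie in $Z_{\omega_1}$ (which is a transversal for $\langle\bar a_1\rangle$). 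Cancelling $a_1^{\alpha_1}$ reduces the remaining equality to one inside $G_2$, and the induction hypothesis forces $\alpha_i = \alpha_i'$ for $i \geq 2$.

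I do not expect a serious obstacle here — the argument is a routine induction. The only points requiring a small amount of care are: (i) checking that $(a_2,\ldots,a_n)$ really is a base of $G_2$ with the claimed order, which is immediate from the definition of polycyclic series and the fact that $G_i/G_{i+1}$ for $i \geq 2$ is computed the same way whether viewed inside $G$ or inside $G_2$; and (ii) being careful with the finite-order case, namely that $Z_{\omega_1} = \{0,1,\ldots,\omega_1-1\}$ is a complete set of coset representatives for $\langle \bar a_1\rangle$ in $G_1/G_2$, so that the reduction $g \mapsto a_1^{-\alpha_1} g$ lands in $G_2$ with $\alpha_1$ uniquely pinned down. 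Neither of these is difficult, so the proof is short.
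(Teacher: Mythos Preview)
Your proof is correct and follows exactly the approach the paper indicates: the paper simply states that the result ``easily follows by induction on $n$'' without spelling out the details, and your argument is precisely that induction.
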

In the notation above for an element $g \in G$ the tuple $\sigma(g) = (\alpha_1, \ldots, \alpha_n)$  from (\ref{eq:coordinates}) is called the tuple of {\em coordinates } of $g$ in the base $a$. Sometimes we write the equality (\ref{eq:coordinates}) as $g = a^{\sigma(g)}$.

Now let  $G$ be a  finitely generated nilpotent group of nilpotency class $c$ (hence polycyclic).   Put $G_i = \gamma_i(G)$. Then  
$$
G = G_1 >  G_2 > \ldots > G_c > G_{c+1} = 1.
$$  
is the lower central series of $G$.  All the quotients $G_i/G_{i+1}$ are finitely generated abelian groups. Let $d_i$ be a tuple of elements from $G_i$ such that its image in $G_i/G_{i+1}$  is a base of  $G_i/G_{i+1}$.  Then the  tuple   $a = d_1d_2\ldots d_c$, obtained by concatenation from the tuples $d_1, \ldots, d_c$, is  a base of $G$, called a {\em lower central  base} of $G$.  Let $\omega$ be the order of a given lower central base $a$ of $G$. Denote by $t(G)$ the product of all finite $\omega_i$  in $\omega$. Clearly, the product $t(G)$, termed the {\em order of lower central torsion} in $G$, does not depend on the base $a$, it is an invariant of $G$. 

To construct a lower central base of $G_X$ of a special type  we make use of the following lemma.
\begin{lemma}\label{le:isolator}
Let $G_X$ be as above. Then for every $i = 1, \ldots,c$ the image of $G_i$ in $\gamma_i(G_X)/\gamma_{i+1}(G_X)$ is a direct summand of $\gamma_i(G_X)/\gamma_{i+1}(G_X)$. 
\end{lemma}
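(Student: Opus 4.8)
The plan is to exploit the fact that $G_X = (G \ast N_c(X))/\gamma_{c+1}(G \ast N_c(X))$ carries a natural retraction onto $G$ induced by the map $N_c(X) \to 1$; call it $\pi \colon G_X \to G$, with section $\iota \colon G \hookrightarrow G_X$ so that $\pi \circ \iota = \mathrm{id}_G$. Applying $\pi$ and $\iota$ to the abelian sections of the lower central series, one gets for each $i$ induced maps $\bar\pi_i \colon \gamma_i(G_X)/\gamma_{i+1}(G_X) \to G_i/G_{i+1}$ and $\bar\iota_i \colon G_i/G_{i+1} \to \gamma_i(G_X)/\gamma_{i+1}(G_X)$ with $\bar\pi_i \circ \bar\iota_i = \mathrm{id}$; these are well-defined because $\pi$ and $\iota$ are homomorphisms and hence send $\gamma_i$ into $\gamma_i$. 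Thus $\bar\iota_i$ is a split injection of abelian groups, and the image of $G_i$ in $\gamma_i(G_X)/\gamma_{i+1}(G_X)$ — which is exactly the image of $\bar\iota_i$ — is a direct summand, with complement $\ker \bar\pi_i$. This is the whole argument in outline; the remaining work is to check the two points that make it go through.

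First I would verify that $\iota$ is genuinely an embedding and $\pi \circ \iota = \mathrm{id}$, so that the image of $G_i$ in $\gamma_i(G_X)/\gamma_{i+1}(G_X)$ really is $\bar\iota_i(G_i/G_{i+1})$ and not something smaller. The embedding of $G$ into $G_X$ is already recorded in the excerpt (the canonical epimorphism $G_X \to G \times N_c(X)$ is stated to be identical on $G$ and on $N_c(X)$), and composing that epimorphism with the projection $G \times N_c(X) \to G$ gives exactly the retraction $\pi$; so $\pi|_G = \mathrm{id}_G$ is immediate and no separate argument is needed. Second, I would check that $\bar\iota_i$ is well-defined and injective: it is well-defined because $\iota(\gamma_{i+1}(G)) \subseteq \gamma_{i+1}(G_X)$, and it is injective because it has the left inverse $\bar\pi_i$. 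That a split monomorphism of abelian groups has image a direct summand is standard (the group splits as $\operatorname{im}\bar\iota_i \oplus \ker\bar\pi_i$), so one is done.

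There is essentially no serious obstacle here; the only point requiring a moment's care is making sure the retraction $\pi$ descends correctly to the graded pieces — i.e.\ that it maps $\gamma_i(G_X)$ onto $\gamma_i(G)$ (surjectivity on each term, which again follows from $\pi$ being surjective and a homomorphism) and $\gamma_{i+1}(G_X)$ into $\gamma_{i+1}(G)$, so that the induced map on the quotient is defined and splits $\bar\iota_i$. If one wanted to avoid even invoking the external embedding statement, one could instead argue by induction on $n = c+1-i$ as the excerpt hints, peeling off the top of the lower central series, but the retraction argument is cleaner and self-contained given what has already been set up. I would write it in the three-line form above, mentioning the induction only as an alternative.
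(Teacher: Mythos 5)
Your proof is correct, and it takes a genuinely different route from the paper's. The paper argues by induction on $c$, reducing to the top graded piece $i=c$, and then exhibits an explicit complement: writing $K$ for the normal closure of $N_c(X)$ in $G_X$ and $B = \gamma_c(G_X)\cap K$, it uses multiplicativity of the left-normed $c$-fold commutator (as a map into the center) together with the fact that $G$ and $N_c(X)$ generate $G_X$ to show $G_c$ and $B$ generate $\gamma_c(G_X)$, and then uses the map $\phi\colon G_X \to G\times N_c(X)$ to see $G_c\cap B = 1$. Your argument instead observes that this same $\phi$, composed with projection to $G$, gives a retraction $\pi\colon G_X\to G$ with section $\iota$, and that $\pi,\iota$ descend to maps $\bar\pi_i,\bar\iota_i$ on the abelian graded quotients with $\bar\pi_i\circ\bar\iota_i=\mathrm{id}$; a split monomorphism of abelian groups has image a direct summand. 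This handles every $i$ at once, sidesteps the inductive reduction (which implicitly requires identifying $G_X/\gamma_{i+1}(G_X)$ with the $\mathcal N_i$-free product of $G/G_{i+1}$ and $N_i(X)$), and avoids the commutator-multiplicativity step. What the paper's version buys is a concrete description of the complement $B_i$ in terms of the normal closure of $N_c(X)$, which is convenient when it goes on to pick a base $b_i$ of $B_i$; but since your $\ker\bar\pi_i$ is exactly that same subgroup (as $\ker\pi$ is the normal closure of $N_c(X)$), the information is recoverable and nothing is lost.
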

\begin{proof}
By induction on $c$ it suffices to prove the claim for $i = c$. Let $K$ be the normal closure of $N_c(X)$ in $G_X$ and  $B = \gamma_c(G_X) \cap K$. We claim that 
$$\gamma_c(G_X) = G_c \times B.
$$
  To see this consider a commutator $g = [g_1, \ldots,g_c] \in \gamma_c(G_X)$, where $g_i \in G_X$. Notice, that the commutator $\gamma_c = [y_1, \ldots,y_c]$, viewed as a function $ G_X \times \ldots \times G_X \to Z(G_X)$, is multiplicative  in each variable $y_i$. Since $G$ and $N_c(X)$ generate $G_X$ the multiplicativity of the commutator function implies that the subgroups $G_c$ and $B$  generate $\gamma_c(G_X)$. Moreover, the canonical homomorphism   $\phi:G_X \to G \times N_c(X)$ yields 
$\phi(G_c) = G_c \subset  G$ and $\phi(B) \subset N_c(X)$, so $G_c \cap B = 1$  and $\gamma_c(G_X) = G_c \times B$ as required.   \end{proof}

By Lemma \ref{le:isolator} each factor $\gamma_i(G_X)/\gamma_{i+1}(G_X)$ is a direct sum of the type
$$\gamma_i(G_X)/\gamma_{i+1}(G_X) = G_i/G_{i+1} \times B_i.$$
Let, as above, $d_i$ be a base of $G_i/G_{i+1}$ and denote by $b_i$  a base of the abelian group $B_i$.   One can combine the bases  $d_i$ and $b_i$ and arbitrary permute the resulting tuple to get a base of the quotient $\gamma_i(G_X)/\gamma_{i+1}(G_X)$. All the subsequent results of the paper hold for any such a base. However, we order the bases in a particular way to make the writing easier. Namely, we consider  the tuple $e = (b_1d_1)(b_2d_2) \ldots (b_cd_c)$ as  a lower central base of $G_X$ which will be used in the sequent. In this case we say that $b$ {\em extends} the base $a = d_1 \ldots d_c$  of $G$.  
For convenience we choose  tuple $b_1$  of the form   $b_1 = (x_1, \dots,x_k)$. 

Now we fix the  lower central base $e$ of $G_X$ constructed above.  Denote the tail $d_1(d_2b_2) \ldots (d_cb_c)$ of the base $e$ by $f = (f_1, \ldots,f_p)$.

  Then every element $u \in G_X$ can be uniquely written
in the form:

\begin{equation}
\label{eq:normal form}
 u =   x_1^{\gamma _1} \ldots  x_k^{\gamma _k} \  f_1^{\delta_1} \ldots f_p^{\delta_p},
\end{equation}
where $\gamma_j\in \mathbb{Z}_{\omega(x_j)} = \mathbb{Z}, \delta_s \in \mathbb{Z}_{\omega(f_s)}$. Denote $exp(u)  = gcd(\gamma_1, \ldots,\gamma_k)$.  Here we put $exp(u) = 0$  if $\gamma_1 =  \ldots =\gamma_k =0$.

\begin{definition}  The norm $|| \cdot ||$ of an element $u \in G_X$ from (\ref{eq:normal form}) is defined
as

\begin{equation}|| u || = max\{|\gamma _i|,  |\delta _s| \ (i = 1, ..., k;
 s = 1, ... , p)\}.
 \end{equation}
 \end{definition}

The function $u \to ||u||$  gives a ball stratification on $G_X$:  $G_X = \cup_{r\in \N} B_r$, where 
$B_r = \{ u \in G_X: ||u|| \leq r\}.$


\subsection{Satisfiability of random equations}

Let, as usual, $SAT(G, k)\subseteq G_X$ be the set of all equations from $G_X$ which are satisfiable in $G$, and $NSAT(G, k)$ the complement of $SAT(G, k)$ in $G_X$. 
 In  this section we estimate the asymptotic densities of the sets $SAT(G, k)$ and $NSAT(G, k)$
with respect to the ball stratification $G_X = \cup_{r\in \N} B_r$. 

\begin{theorem}
\label{th:4.4}
 Let $G$ be an infinite  finitely generated nilpotent non-abelian group.  Assume that $k\geq 2.$ Then the following estimates hold: 
 \begin{equation}
\label{eq:4.11}
 \liminf \rho_n(SAT(G,k))  \geq \frac{1}{t(G)}\frac{\zeta (k +  h(G))}{\zeta (k)},
 \end{equation}

\noindent where $h(G)$ is the Hirsch length of $G$ and $t(G)$ the order of lower central torsion of $G$;

\begin{equation}
\label{eq:4.11b}
\limsup \rho_n(SAT(G,k)) \leq \frac{\zeta (k + m)}{\zeta (k)},
\end{equation}
where $m$ is the torsion-free rank (Hirsch length) of the abelianization $G/G_2$ of $G$.

\end{theorem}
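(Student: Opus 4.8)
The plan is to reduce both estimates to the abelian computations of Section~\ref{sec:abelian} by projecting $G_X$ onto suitable free abelian quotients, combined with Shmelkin-type arguments for the lower bound. Throughout I use the normal form~(\ref{eq:normal form}) and the fact that, by~(\ref{eq:normal form}), the coordinate $x_1^{\gamma_1}\cdots x_k^{\gamma_k}$ part is recorded by genuine integers $\gamma_i\in\mathbb Z$, while the $f$-coordinates $\delta_s$ run over $\mathbb Z_{\omega(f_s)}$ (finite or infinite). Write $B_r=\cup_{\gamma=0}^{r}B_{r,\gamma}$ with $B_{r,\gamma}=\{u\in B_r\mid \exp(u)=\gamma\}$, exactly as in the proof of Theorem~\ref{th:abelian-two-var}; since $b_1=(x_1,\dots,x_k)$ and these variables are free of infinite order, $B_{r,\gamma}$ decomposes as $P_{r,\gamma}^{(X)}$ times the ball in the remaining coordinates $f_1,\dots,f_p$, so $|B_{r,\gamma}|=|P_{r,\gamma}^{(X)}|\cdot\prod_s\min(2r+1,\omega(f_s))$ and the relative frequency of the slice $\{\exp(u)=\gamma\}$ inside $B_r$ converges to $\dfrac{1}{\gamma^k\zeta(k)}$ by Proposition~\ref{pr:2.5}, uniformly in $\gamma$.

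\emph{Upper bound~(\ref{eq:4.11b}).} Let $\pi:G_X\to G_X/\gamma_2(G_X)$ be the abelianization and let $\bar\pi$ be the further projection onto the torsion-free part, which is a free abelian group $A$ of rank $k+m$ with a basis that extends $(x_1,\dots,x_k)$ (here $m$ is the torsion-free rank of $G/G_2$, and the $k$ variable-generators survive because $b_1=(x_1,\dots,x_k)$). The key observation is that if $u=1$ is satisfiable in $G$ then its image $\bar\pi(u)=1$ is satisfiable in $A$: a solution in $G$ maps to a solution in $G/\gamma_2(G_X)$ and hence in $A$. Therefore $SAT(G,k)\subseteq \bar\pi^{-1}(SAT(A,k))$ and, after checking that $\bar\pi$ is compatible with the ball stratifications up to the uniform error terms already controlled in Section~\ref{sec:abelian} (the fibres of $\bar\pi$ restricted to $B_r$ have size independent of which coset one is in, because the torsion $f$-coordinates and the $\gamma_{\ge 2}$-coordinates range over fixed sets), we get $\bar\rho(SAT(G,k))\le\bar\rho\big(\bar\pi^{-1}SAT(A,k)\big)=\rho(SAT(A,k))=\zeta(k+m)/\zeta(k)$ by Theorem~\ref{th:abelian-two-var}.

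\emph{Lower bound~(\ref{eq:4.11}).} Fix $\gamma$ with $\gcd(\gamma,t(G))=1$; I claim that \emph{every} equation $u=1$ with $\exp(u)=\gamma$ whose image in $G_X/\gamma_2(G_X)$ already "lies in the right congruence class'' is satisfiable in $G$. Concretely, by Shmelkin's theorem the congruence to solve is governed by divisibility conditions on the coefficient coordinates in each lower central factor $\gamma_i(G)/\gamma_{i+1}(G)$ by $\gamma$; the torsion-free parts contribute exactly the condition "$\gamma$ divides each of the $h(G)$ infinite-order coordinates'', while the finite-order coordinates impose no obstruction once $\gcd(\gamma,t(G))=1$ because $\gamma$ is then invertible modulo each $\omega(f_s)$. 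Thus, inside the slice $\exp(u)=\gamma$, the satisfiable equations are (at least) those whose $h(G)$ torsion-free $G$-coordinates lie in $\gamma\mathbb Z^{h(G)}$ and whose $k$ variable-coordinates form a $\gamma$-primitive tuple — the torsion $f$-coordinates and all $\gamma_{\ge 2}$-coordinates are unconstrained. Counting as in Theorem~\ref{th:abelian-two-var}, the relative density of this set inside $B_r$ tends to $\dfrac{1}{\gamma^k\zeta(k)}\cdot\dfrac{1}{\gamma^{h(G)}}$, again with uniform error bounds from Propositions~\ref{pr:gamma-Z-k} and~\ref{pr:2.5}. Summing over all $\gamma\ge1$ with $\gcd(\gamma,t(G))=1$ — and discarding all other $\gamma$, which only loses mass — we obtain
$$
\liminf_{n\to\infty}\rho_n(SAT(G,k))\ \ge\ \frac{1}{\zeta(k)}\sum_{\substack{\gamma\ge1\\ \gcd(\gamma,t(G))=1}}\frac{1}{\gamma^{k+h(G)}}\ \ge\ \frac{1}{t(G)}\,\frac{\zeta(k+h(G))}{\zeta(k)},
$$
where the last inequality uses that the Dirichlet series restricted to $\gamma$ coprime to $t(G)$ equals $\zeta(k+h(G))\prod_{p\mid t(G)}(1-p^{-(k+h(G))})$, and each local factor is at least $1-p^{-1}\ge$ (the reciprocal of the $p$-part of $t(G)$), so the product is at least $1/t(G)$.

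\emph{Main obstacle.} The delicate point is the lower-bound claim that $\gcd(\gamma,t(G))=1$ together with $\gamma\mid$(torsion-free $G$-coordinates) \emph{suffices} for solvability in all of $G$, not merely in $G/\gamma_2$. This is where Shmelkin's theorem~\cite{Shm} must be applied carefully, lifting a solution through the lower central series one factor at a time and using invertibility of $\gamma$ modulo the finite orders to kill the torsion obstructions; making the induction uniform over the unbounded range of admissible $\gamma\le r$ — so that the error terms from Propositions~\ref{pr:gamma-Z-k} and~\ref{pr:2.5} can be summed against $\sum\gamma^{-(k-1)}$ exactly as in the proof of Theorem~\ref{th:abelian-two-var} — is the part that requires the most care.
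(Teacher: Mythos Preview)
Your upper bound is correct and coincides with the paper's argument: project to the maximal free-abelian quotient $\bar G\cong\mathbb Z^{m}$, note that $SAT(G,k)\subset\bar\pi^{-1}(SAT(\bar G,k))$, check that the fibres of $\bar\pi$ over $B_r$ all have the same cardinality once $r$ exceeds the finite orders, and apply Theorem~\ref{th:abelian-two-var}.

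The lower bound has a genuine gap. You assert that, for $\gcd(\gamma,t(G))=1$, every $u$ in the slice $\exp(u)=\gamma$ whose torsion-free $G$-coordinates lie in $\gamma\mathbb Z^{h(G)}$ is satisfiable, with the non-$G$ base coordinates (the $b_i$-part, $i\ge 2$) unconstrained. This is already false for the Heisenberg group $G=\langle a_1,a_2\rangle$ (so $t(G)=1$, $h(G)=3$) with $k=2$. Take
\[
u=x_1^{2}\,a_1^{2}\,[x_1,a_2],
\]
which has $\gamma=2$ and $G$-coordinates $(\alpha_1,\alpha_2,\alpha_3)=(2,0,0)\in 2\mathbb Z^{3}$; by your criterion $u=1$ should be solvable. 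But modulo $G_2$ one is forced to $y_1=a_1^{-1}a_3^{r}$, and then $u(y_1,y_2)=a_1^{-2}a_3^{2r}\cdot a_1^{2}\cdot[a_1^{-1},a_2]=a_3^{2r-1}\neq 1$. The exponent $1$ on $[x_1,a_2]$ --- a $b_2$-coordinate you declared free --- lands in $G_2$ after substitution and \emph{shifts} the divisibility obstruction on $\alpha_3$; the correct condition is that $\alpha_3$ lie in a coset $-\mu+\gamma\mathbb Z$ whose offset $\mu$ depends on all preceding coordinates, including the $b$-coordinates. So the set you describe is not contained in $SAT(G,k)$.

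The paper proves the lower bound by an inductive construction of a set $T\subset SAT(G,k)$ along the base $e$: it carries a running partial solution $y^{(\bar\beta_i)}$ and, at each infinite-order $G$-coordinate $f_{i+1}$, admits exactly the coset $-\mu(y^{(\bar\beta_i)})+\gamma\mathbb Z$ (Case~2), while at each finite-order $G$-coordinate it fixes a single residue (Case~3, whence the factor $1/t(G)$), and at each non-$G$ coordinate it allows every value (Case~1). Because every coset of $\gamma\mathbb Z$ has the same density as $\gamma\mathbb Z$, the \emph{count} per fibre is exactly what you wrote, and the sum over $\gamma$ reduces to the abelian computation of Theorem~\ref{th:abelian-two-var}; but one must actually build this fibred set rather than assert that the product set $P_{k,\gamma}^{(X)}\times\gamma\mathbb Z^{h(G)}\times(\text{rest})$ sits inside $SAT(G,k)$. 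Your restriction to $\gcd(\gamma,t(G))=1$ --- so that at finite-order $G$-coordinates one can choose the lifting parameter $t$ to hit any residue --- is a valid refinement and, grafted onto the paper's fibred construction, would even yield the slightly sharper constant $\prod_{p\mid t(G)}(1-p^{-(k+h(G))})\ge 1/t(G)$; it just does not repair the error for the infinite-order coordinates.
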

\begin{proof}
Let $a = d_1d_2\ldots d_c = (a_1, \ldots, a_q)$ be a lower central base of $G$, $e = (x_1, \ldots, x_k,f_1, \ldots,f_p)$ the lower central base of $G_X$ constructed above which extends $a$,  and $\omega = (\omega(x_1), \ldots, \omega(f_p))$ the order of $e$.
We define inductively a set of equations $T \subseteq SAT(G,k)$. To do this we start with a ``generic equation'' of the type (\ref{eq:normal form}) 
with a generic  left-hand side
 $$
 u =   x_1^{\gamma^\ast _1} \ldots  x_k^{\gamma^\ast _k} \  f_1^{\delta^\ast_1} \ldots f_p^{\delta^\ast_p},
$$
where  $ \gamma^\ast_i,  \delta^\ast_s$ are viewed as ``parameters'',  and then inductively specialize the parameters into integers thus obtaining the set $T$ as the result of all such  specializations.  Recall that if the order $\omega_s$ of $f_s$  is finite then the corresponding specialization of $\delta^\ast_s$ takes values  in the set $\Z_{\omega(f_s)} = \{0,1, \ldots, \omega(f_s)-1\}$, while each $\gamma^\ast _i$ takes values in $\Z$ since the elements $x_i$ are of infinite order.

 We  specialize the tuple of parameters $\bar{\gamma}^\ast = (\gamma^\ast_1, \ldots,\gamma^\ast_k)$ into an arbitrary non-zero integer $k$-tuple $\bar{\gamma} = (\gamma_1, \ldots,\gamma_k)$. Let $T_0 = \Z^k \smallsetminus  \{(0, \ldots,0)\}$ be the set of all such  $k$-tuples. 
 Now for each tuple $\bar{\gamma}   \in T_0$ we describe a set $T_{1,\bar{\gamma}}$ of all possible specializations of  the tuple of parameters $\bar{\gamma}^\ast\delta_1^\ast $ (here and further in the proof we use the concatenation notation introduced   in Section \ref{se:Malcev}, so  $\bar{\gamma}^\ast\delta_1^\ast = (\gamma^\ast_1, \ldots,\gamma^\ast_k,\delta_1^\ast)$) that extend the given specialization $\bar{\gamma}^\ast \to \bar{\gamma}. $  
Fix an arbitrary tuple $\bar{\gamma} = (\gamma_1, \ldots,\gamma_k) \in S$. Let $0 \neq \gamma = gcd(\gamma_1, \ldots,\gamma_k)$. Fix $\xi_i  \in \Z \  (i = 1, ... , k)$ such that 
 $\gamma  = \xi _1 \gamma _1 + ... + \xi _k \gamma _k$.  Notice that $f_1$ is an element from the base $a$ of $G$. Specializations of $\delta_1^\ast$  depend on whether $\omega(f_1)$ is  finite or infinite. If $\omega(f_1) = \infty$ then we   specialize $\delta_1^\ast$ into an arbitrary element from  $\gamma\Z$. Otherwise, $\delta_1^\ast$ takes a single value $\lambda_1$, where $\lambda_1 \in \Z_{\omega(f_1)}$ is the remainder of $\gamma$ modulo $\omega(f_1)$. Denote the resulting set of specializations of parameters $\gamma^\ast_1, \ldots,\gamma^\ast_k, \delta_1^\ast$ by $T_{1,\bar{\gamma}}$.
 Put $T_1 = \cup_{\bar{\gamma} \in T_0}  T_{1,\bar{\gamma}} $. For induction we need to prove the following properties of the set of specializations $T_1$.
  
 Fix an arbitrary specialization $\bar{\beta}_1 = (\gamma_1,\ldots, \gamma_k,\delta_1) \in T_{1,\bar{\gamma}}$. We construct a tuple  
 $y^{(\bar{\beta}_1)} = (y_{11}, \ldots y_{1,k})$ of elements from $G$ that solves the ``abridged'' equation 
 $$
u^{(\bar{\beta}_1)}(x_1, \ldots,x_k)  =  x_1^{\gamma_1} \ldots  x_k^{\gamma_k}  f_1^{\delta_1} = 1
 $$
 in the quotient group $G^{(1)} = G/\langle a_2, \ldots,a_q\rangle$.
 Namely, if  $\omega(f_1) = \infty$ then put  
 $$y_{1i} = f_1^{-\delta_1^\prime\xi_i} ,  \ \ i = 1, \ldots,k,$$
where $ \delta_1^\prime = \delta_1/\gamma$. Substituting $y_{1i} $ for $x_i$ in $u^{(\bar{\beta}_1)}(x_1, \ldots,x_k)$ one gets 
$$f_1^{-\delta_1^\prime(\xi_1\gamma_1 + \ldots + \xi_k\gamma_k)}f_1^{\delta_1} = 1.$$
So $y^{(\bar{\beta}_1)}$ is a solution of the abridged equation  $u^{(\bar{\beta}_1)}(x_1, \ldots,x_k) = 1$ in $G^{(1)}$ (even in $G$). In the case when $\omega(f_1)$ is finite put $y_{1i} = f_1^{-\xi_i} ,  \ \ i = 1, \ldots,k.$ Hence
 $$f_1^{-(\xi_1\gamma_1 + \ldots + \xi_k\gamma_k)}f_1^{\lambda_1} = f_1^{-\gamma +\lambda_0} \in 
 \langle a_2, \ldots,a_q\rangle
 $$
 and again $y^{(\bar{\beta}_1)}$ is a solution of the abridged equation   in $G^{(1)}$. This finishes  the basis of induction.

 To describe the induction step of our construction take an arbitrary $i$ such that $1\leq  i <p$.

 Suppose by induction that  a set $T_i$  of $(k+i)$-tuples that give specializations of $\gamma_1^\ast , \ldots, \gamma_k^\ast, \delta_1^\ast, \ldots, \delta_i^\ast$
  is  constructed in such a way that for every tuple $\bar{\beta}_i = (\gamma_1, \ldots,\gamma_k,\delta_1, \ldots,\delta_i) \in T_i$ the following holds.    Let $i^\prime$ be the greatest index such that $i^\prime \leq i$ and $f_{i^\prime}$ belongs to the base $a$ (i.e., $f_{i^\prime} \in G$). Then there is a tuple  of elements $y^{(\bar{\beta}_i)} = (y_{i1}, \ldots, y_{ik})$ of $G$  such that it solves the abridged equation
 $$
 u^{(\bar{\beta}_i)}(x_1, \ldots,x_k)  =  x_1^{\gamma_1} \ldots  x_k^{\gamma_k}  f_1^{\delta_1}\ldots f_{i^\prime}^{\delta_{i^\prime}} = 1
$$
in the quotient group $G^{(i)} = G/A_{i+1}$, where $A_{i+1} = \langle a_{i^\prime+1}, \ldots, a_q\rangle \leq G$.

Now for a given $\bar{\beta}_i = (\gamma_1, \ldots,\gamma_k,\delta_1, \ldots,\delta_i)  \in T_i$ we construct  a set $T_{i+1,\bar{\beta}_i}$  of $(k+i+1)$-tuples that extend the given specialization $\bar{\beta}_i$ by specifying the values of $\delta_{i+1}^\ast$. As before $\gamma = gcd(\gamma_1, \ldots,\gamma_k)$ and $\xi_1, \ldots, \xi_k$ are fixed integers with $\xi_1\gamma_1 + \ldots +\xi_k\gamma_k = \gamma$.  There are three cases to consider.

Case 1. If $f_{i+1}$ is not an element from $G$ (not in the base $a$) then $\delta_{i+1}$ is an arbitrary element from $\Z_{\omega(f_{i+1})}$. In this case 
$$T_{i+1,\bar{\beta}_i} = \{\bar{\beta}_i\delta_{i+1} \mid \delta_{i+1} \in \Z_{\omega(f_{i+1})}\}, \ \ T_{i+1} = \cup_{\bar{\beta}_i \in T_i} T_{i+1,\bar{\beta}_i} .$$
  For any $\bar{\beta}_{i+1} \in T_{i+1}$ put $y^{(\bar{\beta}_{i+1})} = y^{(\bar{\beta}_{i})}$.
Notice, that $(i+1)^\prime = i^\prime$. Since the abridged equations $u^{(\bar{\beta}_i)}(x_1, \ldots,x_k)$ and $u^{(\bar{\beta}_{i+1)})}(x_1, \ldots,x_k) $ are the same, as well as the groups $G^{(i)} = G/\langle a_{i^\prime+1}, \ldots, a_q\rangle$ and $G^{(i+1)} = G/\langle a_{(i+1)^\prime+1}, \ldots, a_q\rangle$,  the tuple $y^{(\bar{\beta}_{i+1})}$ is a solution of the equation $u^{(\bar{\beta}_{i+1)})}(x_1, \ldots,x_k) $  in the group $G^{(i+1)}$, as required.

Case 2. Suppose  $f_{i+1} \in G$ and $\omega(f_{i+1})$ is infinite. In this case $(i+1)^\prime = i+1$. For a given  tuple $\bar{\beta}_i = (\gamma_1, \ldots,\gamma_k,\delta_1, \ldots,\delta_i)  \in T_i$ 
 we need to specialize the parameter $\delta_{i+1}^\ast$   into some $ \delta_{i+1} \in \Z$ in such a way that  the abridged equation  
\begin{equation}\label{eq:special-2}
 u^{(\bar{\beta}_{i+1} )}(x_1, \ldots,x_k)  =  x_1^{\gamma_1} \ldots  x_k^{\gamma_k}  f_1^{\delta_1}\ldots f_i^{\delta_i}f_{i+1}^{\delta_{i+1}} = 1
\end{equation}
has a solution in the group  $G^{(i+1)}$ (here  $\bar{\beta}_{i+1} = \bar{\beta}_i\delta_{i+1}$). We are looking for a solution of the equation (\ref{eq:special-2}) in the form $y^{(\bar{\beta}_{i+1} )} = (y_{i+1,1}, \ldots, y_{i+1,k})$, where $y_{i+1,j} = y_{i,j}f_{i+1}^{t\xi_i}$, where $t \in \Z$ is to be determined later. Notice that substituting $y^{(\bar{\beta}_{i} )}$ into  $u^{(\bar{\beta}_{i+1} )}(x_1, \ldots,x_k)$ results in  an element from the subgroup $A_{i+1} = \langle a_{i+1}, \ldots, a_q\rangle$. Indeed, by induction hypothesis the product 

$$
 u^{(\bar{\beta}_i)}(y^{(\bar{\beta}_{i} )}, \ldots,y^{(\bar{\beta}_{i} )})  =  (y^{(\bar{\beta}_{i} )})^{\gamma_1} \ldots  (y^{(\bar{\beta}_{i} )})^{\gamma_k}  f_1^{\delta_1}(y^{(\bar{\beta}_{i} )})\ldots f_{i^\prime}^{\delta_{i^\prime}} (y^{(\bar{\beta}_{i} )})
$$
lies in $A_{i+1}$. Here  $f_j(y^{(\bar{\beta}_{i} )})$ is the result of substitution of $y_{i,s}$ for $x_s$ ($s= 1, \ldots,k$)  in  $f_j$ ($f_j \in G_X$ are words in the generators of $G$ and $X$). If $i^\prime  < i$ then the elements $f_{i^\prime +1},  f_{i^\prime +2}, \ldots, f_i$ between $f_{i^\prime}$ and $f_{i+1}$ are not in $G$. Suppose $f_{i+1}$ is in the segment  $b_rd_r$ of the base $e$ that comes from the factor   $\gamma_r(G_X)/\gamma_{r+1}(G_X)$, then by construction $f_{i+1} \in d_r$ so if the specifications  $\delta_{i^\prime +1} ^\ast \to \delta_{i^\prime +1},  \delta_{i^\prime +2}^\ast \to \delta_{i^\prime +2}, \ldots, \delta_i^\ast \to \delta_i$  are fixed then the product $f_{i^\prime +1}^{\delta_{i^\prime +1}} f_{i^\prime +2}^{\delta_{i^\prime +2}}, \ldots, f_i^{\delta_{i}}$ is also in the subgroup $A_{i+1}$. It follows that 
$$
 u^{(\bar{\beta}_{i+1} )}(y^{(\bar{\beta}_{i} )}, \ldots,y^{(\bar{\beta}_{i} )})  =  f_{i+1}^{\delta_{i+1} +\mu(y^{(\bar{\beta}_{i})})} (mod \ \langle a_{i+2}, \ldots, a_q\rangle ),
$$
where $\mu(y^{(\bar{\beta}_{i})})$ is an integer that uniquely determined by $y^{(\bar{\beta}_{i})}$.
Here we write that two elements, say $g$ and $h$,  are equal modulo a subgroup $A$ if $g^{-1}h \in A$.
Now substituting $x_i \to y_{i,j}f_{i+1}^{t\xi_i}$  into the abridged equation (\ref{eq:special-2}), i.e,  computing the value $u^{(\bar{\beta}_{i+1} )}(\bar{\beta}_{i+1}, \ldots,\bar{\beta}_{i+1})$, one gets 
$$
f_{i+1}^{t\xi_1\gamma_1  \ldots t\xi_k\gamma_k +\delta_{i+1} +\mu(y^{(\bar{\beta}_{i})})} 
 = f_{i+1}^{t\gamma +\delta_{i+1} +\mu(y^{(\bar{\beta}_{i})})} (mod \ \langle a_{i+2}, \ldots, a_q\rangle ).
$$
Now we define  the specialization $\delta_{i+1}^\ast \to \delta_{i+1}$ by requesting  that $\delta_{i+1} +\mu(y^{(\bar{\beta}_{i})})$ is divisible by $\gamma$.  In this case for such a fixed $\delta_{i+1}$  one can find the value of $t$ such that $y^{(\bar{\beta}_{i+1} )}$ is a solution of the abridged equation $u^{(\bar{\beta}_{i+1} )}(x_1, \ldots,x_k)  =1 (mod \ \langle a_{i+2}, \ldots, a_q\rangle )$, i.e., in the group  $G^{(i+1)}$. Thus, we define 
$$T_{i+1,\bar{\beta}_i} = \{\bar{\beta}_i\delta_{i+1} \mid \gamma \ \text{divides} \ \delta_{i+1} +\mu(y^{(\bar{\beta}_{i})})\}, \ \ T_{i+1} = \cup_{\bar{\beta}_i \in T_i} T_{i+1,\bar{\beta}_i} .$$
and all the inductive conditions hold for $T_{i+1}$.

Case 3. Suppose  $f_{i+1}$ is $G$ and $\omega(f_{i+1})$ is finite. Then, in the notation above, we specialize $\delta_{i+1}$  as such a number in $\Z_{\omega({f_{i+1})}}$ that $\delta_{i+1} +\mu(y^{(\bar{\beta}_{i})}) = \gamma (mod \ \omega(f_{i+1}))$. In this case $y^{(\bar{\beta}_{i+1} )}$ is a solution of the abridged equation $u^{(\bar{\beta}_{i+1} )}(x_1, \ldots,x_k)  =1 (mod \ \langle a_{i+2}, \ldots, a_q\rangle )$ provided  $t = 1$. Again, define 
$$T_{i+1,\bar{\beta}_i} = \{\bar{\beta}_i\delta_{i+1} \}, \ \ T_{i+1} = \cup_{\bar{\beta}_i \in T_i} T_{i+1,\bar{\beta}_i} .$$
It is clear that the inductive hypothesis holds for $T_{i+1}$, as required.

This finishes the construction of the sets of specializations $T_i, i = 1, \ldots,p$. Denote $T = T_p$. 
Each specialization $\beta \in T$ uniquely determine an equation $u^{(\bar{\beta})}$, which by construction has a solution in $G$.  Put $\CE(T) = \{u^{(\bar{\beta})} \mid \bar{\beta} \in T\}$. Then $\CE(T) \subseteq SAT(G,k)$. To prove  (\ref{eq:4.11}),  the first statement of the theorem, it suffices to show that 
$$
\liminf \rho_n(\CE(T)) \geq \frac{1}{t(G)}\frac{\zeta (k +  h(G))}{\zeta (k)}.
$$ 
 Observe, that $||u^{(\bar{\beta})} || = ||\bar{\beta} ||$, so   it suffices to estimate $\rho_r(T)$ in the space $V  = \Z^k \times \Z_{\omega(f_1)} \times \ldots \times \Z_{\omega(f_p)} $ with respect to the ball stratification in the norm $|| \cdot||$. Observe that the ball $B_r(V)$ in $V$ relative to $|| \cdot ||$ is defined by
 $$
 B_r(V) = \{(\gamma_1, \ldots, \gamma_k, \delta_1, \dots,\delta_p) \mid  \gamma_i \in \Z,  |\gamma_i| \leq r,  \delta_s \in \Z_{\omega(f_s)}, |\delta_s| \leq r\}.
 $$
Below we give an estimate of $\rho_r(T)$ for a sufficiently large $r$. Fix an arbitrary $r$ such that $r > \omega(f_i)$ for each finite $\omega(f_i)$,  $i = 1, \ldots,p$.  

Notice that the construction of the sets $T_i$  depends crucially on $\gamma = gcd(\gamma_1, \ldots,\gamma_k)$ where $\bar{\gamma}^\ast \to \bar{\gamma}$ is the initial specialization of $\bar{\gamma}^\ast$ into $T_0$.
As in Theorem \ref{th:abelian-two-var} it is  convenient to partition each set $T_i$ into its $\gamma$-slices $T_i^\gamma$   which are defined inductively as follows. 
For every $\gamma \neq 0$ put 
 $$
 T_0^\gamma = \{(\gamma_1, \ldots,\gamma_k)  \in T_0 \mid \gamma = gcd(\gamma_1, \ldots,\gamma_k)\},
 $$
  so $T_0 = \cup_{\gamma \neq 0}T_0^\gamma$.
Suppose that $T_i^\gamma$ is defined for some $i <p$. Then put    
 $$T_{i+1}^\gamma = \cup_{\bar{\beta}_i \in T_i^\gamma}  T_{i+1,,\bar{\beta}_i}, $$
 so  
 $$
 T_{i+1} = \cup_{\gamma \neq 0}T_{i+1}^\gamma.
 $$

One has  
\begin{equation}\label{eq:Tunion}
\rho_r(T) = \frac{ |T \cap B_r(V)|}{|B_r(V)|}  = \frac{\sum_{\gamma = 1}^r |T^\gamma \cap B_r(V)|}{|B_r(V)|} =  \sum_{\gamma = 1}^r\rho_r(T^\gamma).
\end{equation}
Now we estimate $\rho_r(T^\gamma)$ for a fixed $0\neq \gamma \leq r$. Following the construction of $T_{i+1}^\gamma$, we  fix a specialization $\bar{\beta} \in T_i ^\gamma$ with $||\bar{\beta} ||\leq r$ and estimate the number of extensions $\bar{\beta} \delta_{i+1} \in T_{i+1,\bar{\beta} }$ with $||\delta_{i+1}|| \leq r$. More precisely, let $\pi(T_{i+1,\bar{\beta} }) \subseteq \Z_{\omega(f_{i+1})}$ be the projection of $T_{i+1,\bar{\beta} }$ on  its last coordinate. We  estimate   the frequency 
$$\rho_r^{\Z_{\omega(f_{i+1})}}(\pi(T_{i+1,\bar{\beta} })) = \frac{|\pi(T_{i+1,\bar{\beta} }) \cap B_r(\Z_{\omega(f_{i+1})})|}{|B_r(\Z_{\omega(f_{i+1})})|}$$
of $\rho_r(\pi(T_{i+1,\bar{\beta} }))$ in the space $\Z_{\omega(f_{i+1})}$ with respect to $||\cdot||$.
There are three cases to consider.

Case 1. $f_{i+1} \not \in G$. Then $\delta_{i+1}$ is an arbitrary element from $\Z_{\omega(f_{i+1})}$
In this case 
\begin{equation} \label{eq:Case1}
\rho_r^{\Z_{\omega(f_{i+1})} }(\pi(T_{i+1,\bar{\beta} }) ) = 1.
\end{equation}

Case 2. $f_{i+1} \in G$ and $\omega(f_{i+1})$ is infinite.  Then
$$T_{i+1,\bar{\beta}_i} = \{\bar{\beta}_i\delta_{i+1} \mid \gamma \ \text{divides} \ \delta_{i+1} +\mu(y^{(\bar{\beta}_{i})})\},$$
where $\gamma, \mu(y^{(\bar{\beta}_{i})}) \in \Z$ are fixed. It is easy to see that in this case 
\begin{equation}\label{eq:Case2}
\rho_r^{\Z_{\omega(f_{i+1})}}(\pi(T_{i+1,\bar{\beta} })) = \frac{|\gamma\Z \cap B_r(\Z)|}{|B_r(\Z)|}) = \rho_r^\Z(\gamma \Z)
\end{equation}
(here $\rho_r^\Z(\gamma \Z)$ is the frequency of $\gamma\Z$ in $\Z$) and one can use the estimates from Proposition \ref{pr:gamma-Z-k}. In particular, for every $r \geq \gamma$ (which is the case here)
$$|\rho_r^\Z(\gamma \Z) - \frac{1}{\gamma}| \leq \frac{4}{r}. $$

Case 3. $f_{i+1}$ is $G$ and $\omega(f_{i+1})$ is finite. In this case $\delta_{i+1}^\ast$ has a single specialization into $\Z_{\omega(f_{i+1})}$. So in this case 
\begin{equation}\label{eq:Case3}
\rho_r^{\Z_{\omega(f_{i+1})}}(\pi(T_{i+1,\bar{\beta} })) = \frac{1}{|\Z_{\omega(f_{i+1})}|} = \frac{1}{\omega(f_{i+1})}.
\end{equation}

Now we estimate the  frequency $\rho_r^{V_{i+1}}(T_{i+1}^\gamma)$ of $T_{i+1}^\gamma$  in the space $V_{i+1} = \Z^k \times \Z_{\omega(f_1)} \times \ldots \times \Z_{\omega(f_{i+1})} $ with respect to the ball stratification in the norm $|| \cdot||$. Since   $T_{i+1}^\gamma = \cup_{\bar{\beta}_i \in T_i^\gamma} T_{i+1,\bar{\beta}_i} $ it follows that 
\begin{equation} \label{eq:rhoTi}
\rho_r^{V_{i+1} }(T_{i+1}^\gamma) = \frac{|T_{i+1}^\gamma \cap B_r(V_{i+1} )|}{|B_r(V_{i+1} )|} = \rho_r^{V_i}(T_i^\gamma)\rho_r^{\Z_{\omega(f_{i+1})} }(\pi(T_{i+1,\bar{\beta} })).
\end{equation}
Combining (\ref{eq:rhoTi}) with (\ref{eq:Case1}),  (\ref{eq:Case2}),  (\ref{eq:Case3}) one gets
\begin{equation}\label{eq:prob-1}
\rho_r(T^\gamma) = \rho_r^{\Z^k}(T_0^\gamma)\rho_r^{\Z_{\omega(f_1)} }(\pi(T_1^\gamma)) \ldots \rho_r^{\Z_{\omega(f_p)} }(\pi(T_p^\gamma)) 
\end{equation}

We can clarify and simplify the product (\ref{eq:prob-1}) as follows.  Firstly, remove all the factors $\rho_r^{\Z_{\omega(f_i)} }(\pi(T_i))$  equal to 1 in (\ref{eq:prob-1}). Secondly, collect  all the factors 
$\rho_r^{\Z_{\omega(f_i)} }(\pi(T_i)) = \frac{1}{\omega(f_{i})}$ with  finite $\omega(f_{i})$ together, so  the resulting product is equal to $\frac{1}{t(G)}$. Observe, that all the other factors $\rho_r^{\Z_{\omega(f_i)} }(\pi(T_i))$ in (\ref{eq:prob-1}) are those where $\omega(f_i) = \infty$, in which case all   of them are equal to  $\rho_r^\Z(\gamma \Z)$. The number of such factors is equal to $h(G)$ - the Hirsh length of $G$. Notice also that $T_0^\gamma$ is precisely the set $P_{k,\gamma}$ of all non-trivial $\gamma$-primitive elements in $\Z^k$, so $\rho_r^{\Z^k}(T_0^\gamma) = \rho_r^{\Z^k}(P_{k,\gamma})$. This implies that
\begin{equation}\label{eq:Tgamma}
\rho_r(T^\gamma) = \frac{1}{t(G)}\rho_r^{\Z^k}(P_{k,\gamma})\left (\rho_r^\Z(\gamma \Z) \right )^{h(G)},
\end{equation}
From (\ref{eq:Tunion}) and (\ref{eq:Tgamma}) we get 
$$
\rho_r(T) =   \sum_{\gamma = 1}^r\rho_r(T^\gamma) = \frac{1}{t(G)}\sum_{\gamma = 1}^r \rho_r^{\Z^k}(P_{k,\gamma})\left (\rho_r^\Z(\gamma \Z) \right )^{h(G)}.
$$
The sum on the right is precisely the sum (\ref{eq:SAT(A,k)})  estimated in Theorem \ref{th:abelian-two-var}. It was shown there that 
$$
\lim_{r\to \infty}\sum_{\gamma = 1}^r \rho_r^{\Z^k}(P_{k,\gamma})\left (\rho_r^\Z(\gamma \Z) \right )^{h(G)} = \frac{\zeta (k +  h(G))}{\zeta (k)}.
$$
Hence
$$
\lim_{r \to \infty}\rho_r(T) = \frac{1}{t(G)} \frac{\zeta (k +  h(G))}{\zeta (k)}.
$$
Therefore
$$
\liminf \rho_n(\CE(T)) \geq \frac{1}{t(G)}\frac{\zeta (k +  h(G))}{\zeta (k)},
$$
which proves the first statement of the theorem.  
 
  To prove the second inequality (\ref{eq:4.11b}) consider the maximal  free abelian quotient $\bar{G}= \Z^m$ of $G$. It is easy to see that $\bar{G}  = G / Is(\gamma_2G) $, where $Is(\gamma_2G)$ is the isolator of $\gamma_2G$ in $G$, in other words $\bar{G}$ can be obtained from the abelianization $G/\gamma_2G$ by factoring out the torsion subgroup of $G/\gamma_2G$. It follows that the  rank $m$ is 
  precisely the torsion-free rank (Hirsch length) of $G/\gamma_2G$.
  
  Obviously, if an equation 
  $$
  u =   x_1^{\gamma _1} \ldots  x_k^{\gamma _k} \  f_1^{\delta_1} \ldots f_p^{\delta_p} = 1
  $$
of the type (\ref{eq:normal form})  does not have a solution in $\bar{G}$ then it has no solution in $G$. In the quotient $\bar{G}$  the equation $u = 1$ takes the form 
 $$
 x_1^{\gamma _1} \ldots  x_k^{\gamma _k} \  a_1^{\delta_1} \ldots a_m^{\delta_m} = 1
 $$
 where $d_1 = (a_1, \ldots,a_n)$ is the base corresponding to  $G / \gamma_2G$ and $a_1, \ldots,a_m$ are all the elements in $d_1$ with $\omega(a_i) = \infty$.
 
 By Theorem \ref{th:abelian-two-var} for the free abelian group $\bar{G}$ one has 
 $$
 \rho(SAT(\bar{G},k)) = \frac{\zeta (k + m)}{\zeta (k)}.
 $$
 Therefore,
 \begin{equation}
\label{eq:4.15}
 \liminf_{r \to \infty}\rho_r ( NSAT(G,k)) \geq 1 - \frac{\zeta (k + m)}{\zeta (k)}.
 \end{equation}

It follows that 
$$\liminf_{r \to \infty} \rho_r(NSAT(G,k))= \liminf_{r \to \infty} (1 - \rho_r(SAT(G,k))) = 1-\limsup_{r \to \infty} \rho_n(SAT(G,k)).
$$
Hence
$$
\frac{\zeta (k + m)}{\zeta (k)} \geq \limsup_{r \to \infty} \rho_r(SAT(G,k)),
$$
as required.
\end{proof}

\begin{theorem}
\label{th:4.6}
 Let $G$ be an infinite finitely generated nilpotent group. Then the set $SAT(G, 1)$ is negligible. 

\end{theorem}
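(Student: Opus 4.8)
The plan is to reduce the assertion to the free abelian one-variable case treated in Corollary~\ref{co:estimates}, by pushing equations down to the maximal free abelian quotient of $G$. First I record that the torsion-free rank $m$ of the abelianization $G/G_2$ is at least $1$: a finitely generated nilpotent group whose abelianization is finite is itself finite, so an infinite one has $m \ge 1$. Let $\bar{G} = G/Is(\gamma_2 G) \cong \Z^m$ be the maximal free abelian quotient, $\pi_0 : G \to \bar{G}$ the projection, and $\bar{G}_X = \bar{G} \times A(x_1) \cong \Z^{m+1}$ the space of one-variable equations over $\bar{G}$. As in the proof of Theorem~\ref{th:4.4} I fix the lower central base of $G$ so that its first block $d_1 = (a_1, \ldots, a_n)$ is a base of $G/G_2$ with $a_1, \ldots, a_m$ exactly those members of $d_1$ of infinite order, and take the lower central base $e = (x_1, f_1, \ldots, f_p)$ of $G_X$ extending it; thus $f_i = a_i$ for $i \le n$ and $f_{n+1}, \ldots, f_p \in \gamma_2(G_X)$. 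Since $N_c(X) = A(x_1)$ for a single variable, the universal property of the free product in $\mathcal{N}_c$ lets the pair consisting of $\pi_0 : G \to \bar{G}$ and the identity $A(x_1) \hookrightarrow \bar{G}_X$ extend to a surjection $\bar{\pi} : G_X \to \bar{G}_X$ with $\bar{\pi}(x_1) = x_1$.

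Next comes the bookkeeping. If $u = x_1^{\gamma_1} f_1^{\delta_1} \cdots f_p^{\delta_p} = 1$ is satisfiable in $G$, then applying $\bar{\pi}$ shows that $\bar{\pi}(u) = 1$ is satisfiable in $\bar{G}$; moreover in $\bar{G}_X = \Z^{m+1}$ one has $\bar{\pi}(u) = x_1^{\gamma_1} a_1^{\delta_1} \cdots a_m^{\delta_m}$, because $a_{m+1}, \ldots, a_n$ are torsion in $\bar{G}$ and $f_{n+1}, \ldots, f_p$ lie in $\gamma_2(G_X)$, so all of them die under $\bar{\pi}$. Hence $\bar{\pi}$ acts on Malcev coordinates by keeping the first $m+1$ of them and deleting the rest, so $||\bar{\pi}(u)|| \le ||u||$, $\bar{\pi}(B_r) \subseteq B_r^{(\Z^{m+1})}$, and $SAT(G,1) \cap B_r \subseteq \bar{\pi}^{-1}\!\left(SAT(\bar{G},1) \cap B_r^{(\Z^{m+1})}\right)$. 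A fibre of $\bar{\pi}$ over a point of $B_r^{(\Z^{m+1})}$, intersected with $B_r$, is obtained by letting the remaining coordinates $\delta_{m+1}, \ldots, \delta_p$ run over all values in $\Z_{\omega(f_s)}$ with $|\delta_s| \le r$. As soon as $r$ exceeds every finite $\omega(f_s)$, such a fibre has cardinality $C\,(2r+1)^{h(G_X)-m-1}$, where $C$ is the product of the finite orders among $f_{m+1}, \ldots, f_p$ (the exponent being the number of infinite-order $f_s$ with $s > n$), and $|B_r| = t(G_X)\,(2r+1)^{h(G_X)}$. Therefore
\begin{align*}
\rho_r(SAT(G,1)) \ &\le\ \frac{C\,(2r+1)^{h(G_X)-m-1}\,\bigl|SAT(\bar{G},1) \cap B_r^{(\Z^{m+1})}\bigr|}{t(G_X)\,(2r+1)^{h(G_X)}} \\
&=\ \frac{C}{t(G_X)}\,\rho_r(SAT(\bar{G},1)).
\end{align*}

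To finish, apply Corollary~\ref{co:estimates} to the free abelian group $\bar{G} = \Z^m$ with $m \ge 1$: the factor $\rho_r(SAT(\bar{G},1))$ is $O\!\left((\ln r)/r\right)$ when $m = 1$ and $O(1/r)$ when $m \ge 2$, and in either case tends to $0$ as $r \to \infty$. Consequently $\rho_r(SAT(G,1)) \to 0$, i.e.\ $\rho(SAT(G,1)) = 0$, so $SAT(G,1)$ is negligible.

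The only genuinely delicate step, I expect, is the bookkeeping of the second paragraph: arranging the Malcev base of $G_X$ compatibly with $\bar{\pi}$ so that $\bar{\pi}$ merely deletes a block of coordinates — and hence cannot increase the norm — together with the observation that all torsion occurring in the base contributes only bounded multiplicative constants, which cancel in the ratio above. Once that is in place, the estimate and the appeal to Corollary~\ref{co:estimates} are routine.
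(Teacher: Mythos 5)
Your proof is correct and follows essentially the same route as the paper: project equations from $G_X$ to the maximal free abelian quotient $\bar G\cong\Z^m$ (with $m\ge 1$ since $G$ is infinite), observe that satisfiability in $G$ implies satisfiability in $\bar G$, and then invoke the free abelian one-variable estimates. The paper leaves the fibre-counting reduction implicit by pointing to the analogous step at the end of Theorem~\ref{th:4.4}; you have simply written out that bookkeeping in full, noting that $\bar\pi$ deletes a block of Malcev coordinates and that the finite-order factors cancel in the density ratio.
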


\begin{proof} Let $G/G_2 \simeq \Z^m \times K$, where $K$ is a finite group. Notice that $m \geq 1$ since $G$ is infinite.  If an equation $u \in G_X$ has a solution in $G$ then it has a solution in the quotient $\Z^m$. Now the result follows from Theorem \ref{th:abelian-one-var} (by an argument similar to the one at the end of the proof of Theorem \ref{th:4.4}).

\end{proof}


\begin{thebibliography}{\hspace{0.5in}}

\bibitem{Aoun} R. Aoun, {\it  Random subgroups of linear groups are free}, arXiv 1005.3445v1[math.GR] 19 May 2010. 

\bibitem{A2} G. Arzhantseva, {\it Generic properties of finitely presented groups and Howson's theorem}, Comm. Algebra, 26 (1998), 3783-3792.

\bibitem{A3} G. Arzhantseva, {\it A property of subgroups of infinite index in a free group}, Proc. Amer. Math. Soc., 128 (2000), 3205-3210.

\bibitem{A4} G. Arzhantseva,  P.-A. Cherix,{\it  On the Cayley graph of a generic finitely presented group}, Bull. Belg. Math. Soc. Simon Stevin, 11, no. 4  (2004),  589-601.

\bibitem{AO} G. Arzhantseva, A. Ol'shanskii, {\it Generality of the class of groups in which subgroups with a lesser number of generators are free},  Math. Notes,  59 (1996),  350-355.

\bibitem{BMR} G. Baumslag, A. Myasnikov, V. Remeslennikov, {\it  Algebraic Geometry over Groups,  I. Algebraic Sets and Ideal Theory}, J. of Algebra,
219, no. 1, 1999,  16-79.

\bibitem{BogV} O. Bogopolski, E. Ventura, {\it The mean Dehn function of abelian groups}, J. Group Theory, 11 (2008), 569--586.

\bibitem{BorMyas} A.V. Borovik, A.G. Myasnikov, {\it Quotient tests and random walks in computational group theory}, In: Topological and Asymptotic Aspects of Group Theory, Contemp. Math., (2006), AMS, Providence, RI,  31--45.





\bibitem{BMR1} A. Borovik, A. Myasnikov, V. Remeslennikov, {\it Multiplicative measures on free groups}, Intern.    J. of Algebra and Computation, 13, no.6 (2003), 705--731.


\bibitem{BMR2} A. V. Borovik, A. G. Myasnikov, V. N. Remeslennikov, {\it Algorithmic stratification of the conjugacy problem in Miller's groups}, Intern.    J.  of Algebra and Computation, 17 (2007), 963--997.

\bibitem{BMR3}  A. V. Borovik, A. G. Myasnikov, V. N. Remeslennikov,  {\it The conjugacy problem in amalgamated products I: regular elements and black holes}, Intern. J. of Algebra and Computation, 17 (2007), 1301--1335

\bibitem{BMS}  A. Borovik, A. G. Myasnikov,  V. Shpilrain, {\it Measuring sets in infinite groups}, Computational and Statistical Group Theory (R.Gilman et al. eds), Contemp. Math., 298 (2002), AMS, Providence,  RI, 21--42.


\bibitem{BV} J. Burillo, E. Ventura, {\it Counting primitive elements in free groups}, Geom. Dedicata, 93 (2002), 143-162.

\bibitem{Cal2} D. Calegari,  J. Maher, {\it Statistics and compression of SCL},  arXiv: 1008.4952v1 [mathGR] 29 Aug 2010.


\bibitem{Cal1} D. Calegari, A. Walker, {\it Random rigidity in the free group},  arXiv:1104.1768 [math.GR] 10 Apr 2011. 

\bibitem{Ch1} C. Champetier, {\it Proprietes statistiques des groupes de presentation finie}, Adv. Math., 116 (1995), 197-262.

\bibitem{Ch2}  C. Champetier, {\it The space of finitely generated groups}, Topology, 39 (2000), 657-680.

\bibitem{CS}  P.-A. Cherix, G. Schaeffer, {\it An asymptotic Freiheitssatz for finitely generated groups}, Enseign. Math., 44 no.2 (1998), 9-22.

\bibitem{Chr}  J. Christopher, {\it The asymptotic density of some k-dimensional sets}.
             Amer. Math. Monthly, 63 (1956), 399-401.


\bibitem{CERT} S. Cleary, M. Elder, A. Rechnitzer, J. Taback, {\it Random subgroups of Thompson's group $F$}, Groups Geom. Dyn. 4 (2010), 91-126.



\bibitem{Di} J.D. Dixon, {\it The probability of generating the symmetric group}, Math. Z,. 110 (1969), 199-205.

\bibitem{DPSSh} J.D. Dixon, L. Pyber, A. Seress, A. Shalev, {\it Residual properties of free groups and probabilistic methods}, J. reine angew. Math. (Crelle's Journal), 556 (2003), 159-172.

\bibitem{ET} P. Erdos, P. Turan, {\it On some problems of statistical group theory I.}, Z. Wahrscheinlichkeitstheorie verw. Geb. 4 (1965), 175-186 (in German).




\bibitem{FMR}  B. Fine, A. Myasnikov, G. Rosenberger, {\it Generic subgroups of group amalgams}, Groups. Complexity. Cryptology, 1 (2009), 51--61.




\bibitem{FMR1} E. Frenkel, A.G. Myasnikov,  V.N. Remeslennikov,  {\it Regular sets and counting in free groups}, In: Combinatorial and
Geometric Group Theory,  New Trends in
Mathematics, Birkhauser, Basel, 2010, 93--119.

\bibitem{GK} D. Garber, S. Kaplan, M.Teicher, B. Tsaban, U. Vishne, {\it  Length-based conjugacy search in the Braid group}, In:  Algebraic Methods in Cryptography, Contemp. Math., AMS, 418, 2006, 75--88.

\bibitem{GMMU}  R. Gilman, A. G. Miasnikov, A.D. Myasnikov, A. Ushakov,
{\it Report on generic case complexity}, Vestnik OMGU, Special Issue, 2007,  103--110.




\bibitem{GMO} R. Gilman, A. Miasnikov,  D. Osin, {\it 
Exponentially generic subsets of groups}, 
 Illinois J. Math., 54, no. 1 (2010), 371-388.
 
 \bibitem{GMR2} R. Gilman, A. Myasnikov,  V. Romankov, {\it Random equations in free groups}, arXiv:math.



\bibitem{Gold} R. Z. Goldstein, {\it The density of small words in a free group is 0}, In: Group theory, statistics, and cryptography,  Contemp. Math.,  360, Amer. Math. Soc., Providence, RI, 2004, 47-50.

\bibitem{Gro1}  M. Gromov, {\it Hyperbolic Groups},In:  Essays in Group Theory (G.M. Gersten, editor), MSRI publ. 8, 1987, 75--263.

\bibitem{Gro2}  M. Gromov, {\it Asymptotic invariants of infinite groups}, Geometric group theory,  2 (Sussex, 1991), 1--295, London Math. Soc. Lecture Note Ser., 182, Cambridge Univ. Press, Cambridge, 1993.

\bibitem{Gro3} M. Gromov, {\it Random walks in random groups}, Geom. Funct. Analysis 13 (2003), no. 1, 73-146.

\bibitem{Hall} P. Hall, {\it Nilpotent groups}, Notes of lectures given at the Canadian Mathematical Congress 1957 Summer Seminar, In: The collected works of Philip Hall, Clarendon Press, Oxford, 1988, 415--462. 

\bibitem{HW} G. Hardy, E. Wright, {\it The Theory of Numbers}, 4th ed., Oxford
Univ. Press, Oxford, 1965.




\bibitem{HoltBook} D. Holt, B. Eick, E. O'Brien,
{\it Handbook of computational group theory},
Chapman and Hall/CRC, Discrete Math. and its applications, 2000.


\bibitem{Jit} T. Jitsukawa, {\it Malnormal subgroups of free groups}, In: Computational and Statistical Group Theory, Contemporary Math.,298 (2002), AMS, Providence, RI, 83--96.


\bibitem{KL} W.M. Kantor, A. Lubotzky, {\it The probability of generating a finite
classical group}, Geom. Dedicata, 36 (1990), 67-87.


\bibitem{KMSS1}  I. Kapovich, A. Myasnikov, P. Schupp, V. Shpilrain, {\it Generic-case complexity and decision problems in group theory}, J. of Algebra, 264 (2003), 665--694.

\bibitem{KMSS2} I. Kapovich, A. Myasnikov, P. Schupp, V. Shpilrain, {\it Average-case complexity for the word and membership problems in group theory}, Advances in Math., 190 (2005), 343--359.


\bibitem{KRSS} I. Kapovich, I.Rivin, P. Schupp, V. Shpilrain, {\it Densities in free
groups and $\mathbb{Z}^k$, visible points and test elements}, Math. Research Letters, 14 (2007),  263--284.





\bibitem{KS1} I. Kapovich, P. Schupp, {\it Genericity, the Arzhantseva--Olshanskii method and
the isomorphism problem for one-relator groups}, Math. Ann., 331 (2005), 1--19.

\bibitem{KS2}  I. Kapovich, P. Schupp, {\it  Delzant's T-ivariant, one-relator
groups and Kolmogorov complexity}, Comment.  Math. Helv., 80 (2005), 911--933.

\bibitem{KSS}  I. Kapovich, P. Schupp, V. Shpilrain, {\it Generic properties of Whitehead's Algorithm and isomorphism rigidity of random one-relator groups}, Pacific J. Math. 223, no. 1 (2006), 113-140.


\bibitem{KSS} I. Kapovich, P. Schupp, V. Shpilrain, {\it Generic properties of Whitehead's Algorithm and
isomorphism rigidity of random one-relator groups}, Pacific J. Math., 223 (2006), 113--140.

\bibitem{KargMerz} M.I. Kargapolov, Y. I. Merzlyakov.
{\it Foundations of group theory},
Moscow, Nauka, 1977 (in Russian).



\bibitem{KR} E.G. Kukina, V.A. Roman'kov, {\it Subquadratic growth of the averaged Dehn
function for free Abelian groups},  Siberian Mathematical J., 44 (2003), 605--610.


\bibitem{LiSh1} M.W. Liebeck, A. Shalev, {\it The probability of generating a finite simple group}, Geom. Ded,. 56 (1995), 103-113.

\bibitem{LS} R. Lyndon, P. Schupp, {\it Combinatorial Group Theory}, Springer-Verlag, Berlin, 1977. Reprinted in
 the Classics in mathematics series, 2000.

\bibitem{Magnus} The Magnus Computational Group Theory Package, {\em http://grouptheory.org/grouptheory.org}




\bibitem{maher} J. Maher, Random walks on the mapping class group, arXiv:math/0604433, 2008.

\bibitem{Mal1} A.I. Mal'cev, {\it On a class of homogeneous spaces}. Izv. Akad. Nauk SSSR  Ser. Mat., 13 (1949), 9-32
(in Russian).

\bibitem{MTV} A. Martino, T.Turner, E. Ventura, {\it The density of injective endomorphisms of a free group}, preprint.



\bibitem{Mert}  F. Mertens, {\it Ueber einige asymptotische Gesetze der Zahlentheorie}, J. Reine Angew. Math., 77 (1874), 289-338 (in German).

\bibitem{MO1}  A. Miasnikov, D. Osin, {\it Algorithmically finite groups}, 
 J. of Pure and Applied Algebra. To appear.

\bibitem{MR} A.G. Myasnikov, A.N. Rybalov, {\it Generic complexity of undecidable problems}, J. Symbolic Logic, 73 (2008), 656--673.

\bibitem{MS} A. Myasnikov, V. Shpilrain {\it Automorphic orbits in free
groups}, J. of Algebra, 269, no. 1  (2003),18-27.

\bibitem{MSUbook} A. G. Myasnikov, V. Shpilrain, A. Ushakov, {\it Advanced course on Group-Based Cryptography}, Quaderns, 42, CRM, Barcelona, 2007.

\bibitem{MSU} A. G. Myasnikov, V. Shpilrain, A. Ushakov, {\it Random subgroups of
braid groups: an approach to cryptanalysis of a braid group based
cryptographic protocol}, In:  PKC 2006, Lecture Notes Comp.\ Sci. 3958
(2006), 302--314.

\bibitem{MU}   A.G. Myasnikov, A. Ushakov, {\it Random subgroups and analysis of the length-based and quotient attacks}, J. of Mathematical Cryptology,  1 (2007), 15--47.



\bibitem{MU2}  A. Myasnikov, A. Ushakov,
 {\it Random van Kampen diagrams and algorithmic problems in groups}. Groups. Complexity. Cryptography. To appear.
 




\bibitem{Ollivier2}  Y. Ollivier, {\it Sharp phase transition theorems for hyperbolicity of random groups}, Geom. Funct. Anal. 14, no. 3, (2004), 595-679.

\bibitem{Ollivier3} Y. Ollivier, {\it Cogrowth and spectral gap of generic groups}, Ann. Inst. Fourier (Grenoble), 55, no. 1, (2005), 289-31.


\bibitem{Ollivier}  Y. Ollivier, {\it Critical densities for random quotients of hyperbolic groups}, C. R. Math.\ Acad.\ Sci.\ Paris, 336 (2003), 391--394.

\bibitem{Olsh} A. Yu. Olshanskii, {\it Almost every group is hyperbolic}, Internat.\ J. of Algebra and Computation, 2 (1992), 1--17.






\bibitem{Riv2} I. Rivin, {\it Walks on groups, counting reducible matrices, polynomials, and surface and free group automorphisms}, Duke Math. J.,  142, no. 2 (2008), 353-379.

\bibitem{Riv3} I. Rivin, {\it Zariski Density and Genericity}, Int. Math Res. Notices,   2010, no. 19 (2010),  3649-3657.



\bibitem{Rom} V. A. Roman'kov, {\it Asymptotic growth of averaged Dehn functions for nilpotent groups}, Algebra and Logic, 46 (2007), 37--45.

\bibitem{RST} D. Ruinsky, A. Shamir, B. Tsaban, {\it Cryptanalysis of group-based key agreement protocols using subgroup distance functions}, In:  Advances in Cryptology -- PKC 2007, Lecture Notes in Computer Science 4450, Springer, Berlin,  2007, 61--75.

\bibitem{Shalev3}  A. Shalev, {\it Probabilistic group theory}, In: Groups St Andrews 1997 in Bath, II, London Math. Soc. Lecture Note Series 261, Cambridge Univ. Press, Cambridge, 1999, 648-678.

\bibitem{Shalev4} A.Shalev, {\it Probabilistic group theory and Fuchsian groups},  in  Infinite Groups: Geometric, Combinatorial and Dynamical Aspects Series: Progress in Mathematics 248  (L. Bartholdi, T.  Ceccherini-Silberstein, T. Smirnova-Nagnibeda, A.  Zuk eds.) , Birkh{\~o}user,  Basel, 2005,

\bibitem{Sharp} R Sharp, {\it Local limit theorems for free groups}, Math. Ann. 321,  no.4 (2001),  889-904.

\bibitem{Sims} C. Sims, {\it Computation with finitely presented groups}, Encyclopedia of Mathematics and its Applications, vol 48, Cambridge Univ. Press, Cambridge, 1994.

\bibitem{Shm}  A.L. ShmelÕkin, {\it Complete nilpotent groups}, Algebra i Logika Sem. 6 (1967), no. 2, 111-114.

\bibitem{Woess1}  W. Woess, {\it Cogrowth of groups and simple random walks}, Arch. Math. 41, no. 4(1983),  363-370.

\bibitem{Woess2} W. Woess, {\it Random walks on infinite graphs and groups}, Cambridge Univ. Press, Cambridge, 2000.

\bibitem{Zuk}  A. Zuk, {\it On property (T) for discrete groups}, In: Proc. of the conference  ``Rigidity in dynamics and geometry''  (Cambridge, 2000, M. Burger, A. Lozzi eds.),  Springer, Berlin, 2000, 473-482.

\end{thebibliography}
\end{document}